\renewcommand{\uppercasenonmath}[1]{}
\numberwithin{equation}{section} \theoremstyle{plain}
\newtheorem*{thm*}{Main Theorem}
\newtheorem{thm}{Theorem}[section]
\newtheorem{cor}[thm]{Corollary}
\newtheorem*{cor*}{Corollary}
\newtheorem{lem}[thm]{Lemma}
\newtheorem*{lem*}{Lemma}
\newtheorem*{fact*}{Fact}
\newtheorem*{nota*}{Notation}
\newtheorem*{prop*}{Proposition}
\newtheorem*{rem*}{Remark}
\newtheorem*{observation*}{Observation}
\newtheorem*{exa*}{Example}
\newtheorem*{df*}{Definition}
\newtheorem*{conj*}{Conjecture}
\renewcommand{\geq}{\geqslant}
\renewcommand{\leq}{\leqslant}
\begin{document}
\begin{center}
{\large  \bf Coartinianess of extension and torsion functors}

\vspace{0.3cm} Jingwen Shen, Xiaoyan Yang \\
%\bigskip
Department of Mathematics, Northwest Normal University, Lanzhou 730070,
China\\
E-mails: shenjw0609@163.com, yangxy@nwnu.edu.cn
\end{center}
\bigskip
\centerline { \bf  Abstract} Let $(R,\mathfrak{m})$ be a commutative noetherian local ring with $\mathfrak{m}$-adic topology, $I$ an ideal of $R$. We investigate coartinianess of $\mathrm{Ext}$ and $\mathrm{Tor}$, show that the $R$-module $\mathrm{Ext}_{R}^{i}(N,M)$ is $I$-coartinian if $M$ is a linearly compact $I$-coartinian $R$-module and $N$ is an $I$-cofinite $R$-module of dimension at most $1$; the $R$-module $\mathrm{Tor}_{i}^{R}(N,M)$ is $I$-coartinian in the case $M$ is semi-discrete linearly compact $I$-coartinian and $N$ is finitely generated with dimension at most $2$.
%\bigskip
\leftskip10truemm \rightskip10truemm \noindent \\
\vbox to 0.3cm{}\\
{\it Key Words:} coartinian module; linearly compact module\\
{\it 2020 Mathematics Subject Classification:} 13C15; 13H10

\leftskip0truemm \rightskip0truemm
\bigskip

\section{\bf Introduction and Preliminaries}
\renewcommand{\thethm}{\Alph{thm}}
 Let $R$ denote a commutative noetherian ring and $I$ an ideal of $R$. For an $R$-module $M$, the $i$th \emph{local cohomology module} of $M$ with respect to $I$ is defined as
\begin{center}$\begin{aligned}
\mathrm{H}^{i}_{I}(M)=\underrightarrow{\text{lim}}\mathrm{Ext}^{i}_{R}(R/I^{t},M).
\end{aligned}$\end{center}
An important conjecture of local cohomology is due to Grothendieck \cite{G}: The $R$-module $\mathrm{Hom}_{R}(R/I,\mathrm{H}_{I}^{i}(M))$ is finitely generated for every ideal $I$ and finitely generated $R$-module $M$. In 1970, Hartshorne \cite{H} provided a counterexample to show that this conjecture does not have an affirmative answer in general. He then defined an $R$-module $M$ to be \emph{$I$-cofinite} if $\mathrm{Supp}_{R}M\subseteq \mathrm{V}(I)$ and $\mathrm{Ext}^{i}_{R}(R/I,M)$ is finitely generated for all $i$, and asked:

\vspace{2mm}
{\it{Are the modules $\mathrm{H}^i_I(M)$ $I$-cofinite for all finitely generated
$R$-modules $M$ and all $i$?}}
\vspace{2mm}

Concerning this question, Hartshorne \cite{H} showed that if $R$ is a complete regular local ring and $I$ a prime ideal with 
$\mathrm{dim}R/I=1$ then $\mathrm{H}^i_I(M)$ is $I$-cofinite for any
finitely generated $R$-module $M$ and $i\geq0$.
Serval authors later extended and polished Hartshorne's work for the various case of $\mathrm{dim}R/I$ and the ring $R$ of lower dimensions (see \cite{B, BN, C, M2, Y3}).
On the other hand, Huneke and Koh \cite{HK} proved that for each pair of finitely generated modules $N$ and $M$ over a local ring $R$, under some special conditions, the $R$-module $\mathrm{Ext}_{R}^{1}(N,\mathrm{H}_{I}^{i}(M))$ is $I$-cofinite whenever $\mathrm{dim}R/I \leq 1$. It is well-known that if $\mathrm{dim}R/I \leq 1$ then $\mathrm{H}_{I}^{i}(M)$ are $I$-cofinite and $\mathrm{dim}_{R}\mathrm{H}_{I}^{i}(M)\leq 1$ for all finitely generated
modules $M$ and $i\geq0$. More general, Abazari and Bahmanpour \cite{AB,B1} proved that the $R$-module $\mathrm{Ext}_{R}^{i}(N,M)$ is $I$-cofinite for $i\geq 0$ if $N$ is finitely generated and $M$ is $I$-cofinite with $\mathrm{dim}_{R}M\leq 1$; the $R$-module $\mathrm{Tor}^{R}_{i}(N,M)$ is $I$-cofinite for $i\geq 0$ if $N$ and $M$ are both $I$-cofinite and $\mathrm{dim}_{R}N\leq 1$.

We denote $\Lambda_{I}(M)=\underleftarrow{\text{lim}}M/I^{t}M$ the $I$-adic completion of $M$, and recall the $i$th \emph{local homology module} of $M$ is
\begin{center}$\begin{aligned}
\mathrm{H}_{i}^{I}(M)=\underleftarrow{\text{lim}}\mathrm{Tor}_{i}^{R}(R/I^{t},M).
\end{aligned}$\end{center}Since torsion and
completion are dual, one expects these functors to live up to
behave in a manner dual to local cohomology.
Local homology as duality of local cohomoloy was initiated by Matlis \cite{M4} in 1974. As duality of cofinite modules,
Nam \cite{N} defined an $R$-module $M$ is \emph{$I$-coartinian} if $\mathrm{Cosupp}_{R}M\subseteq \mathrm{V}(I)$ and $\mathrm{Tor}_{i}^{R}(R/I,M)$ is artinian for all $i\geq 0$. The authors \cite{S2} considered two questions on coartinianness, which is in some sense dual to Hartshorne's questions on cofiniteness in \cite{H}, studied the coartinianess of local homology modules for semi-discrete linearly compact modules.

In the sequel the symbol $\mathcal{C}^{1}(R,I)_{cof}$ denotes the category of $I$-cofinite $R$-modules $N$ with $\mathrm{dim}_{R}N\leq 1$, $\mathcal{C}(R,I)_{coa}$ denotes the category of $I$-coartinian $R$-modules and $\mathcal{C}^{1}(R,I)_{coa}$ denotes the category of $R$-modules $M\in \mathcal{C}(R,I)_{coa}$ with $\mathrm{mag}_{R}M\leq 1$.
One attempt of the present paper is to study coartinianess of the $R$-modules $\mathrm{Ext}_{R}^{i}(N,M)$ and $\mathrm{Tor}^{R}_{i}(N,M)$. More precisely, we prove the following results:

\begin{thm}\label{thm:A}
Let $M$ be a linearly compact $R$-module. If $N\in \mathcal{C}^{1}(R,I)_{cof}$ and $M \in \mathcal{C}(R,I)_{coa}$, then $\mathrm{Ext}_{R}^{i}(N,M)\in \mathcal{C}^{1}(R,I)_{coa}$ for $i\geq 0$.
\end{thm}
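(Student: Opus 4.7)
The plan is to reduce by devissage to the case $N=R/\mathfrak{p}$ with $\dim R/\mathfrak{p}\leq 1$, and then to induct on this dimension by a multiplication-by-$x$ argument, using a dual-Melkersson criterion for $I$-coartinianness on linearly compact modules to propagate the conclusion along the long exact sequences.

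First I would reduce to the case that $N$ is finitely generated. Since $\mathcal{C}^{1}(R,I)_{cof}$ is known to be abelian (the Melkersson-type theorem underlying \cite{AB,B1}), any $N\in\mathcal{C}^{1}(R,I)_{cof}$ fits in a short exact sequence $0\to A\to N\to N'\to 0$ with $A$ artinian and $N'$ finitely generated of dimension $\leq 1$. A simple-module filtration of $A$ reduces the artinian situation to $N=R/\fm$, while a prime filtration of $N'$ reduces the finitely generated situation to $N=R/\mathfrak{p}$ with $\dim R/\mathfrak{p}\leq 1$. When $\dim R/\mathfrak{p}=0$ one has $\mathfrak{p}=\fm$, and $\mathrm{Ext}^{i}_{R}(R/\fm,M)$ is linearly compact (because $M$ is and $R/\fm$ is finitely generated), has cosupport contained in $\{\fm\}$, and its derived Tor with $R/I$ is artinian since $M\in\mathcal{C}(R,I)_{coa}$; this places it in $\mathcal{C}^{1}(R,I)_{coa}$.

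For $\dim R/\mathfrak{p}=1$, I would pick $x\in\fm\setminus\mathfrak{p}$, so that $\dim R/(\mathfrak{p}+xR)=0$. Applying $\mathrm{Ext}(-,M)$ to
\[
0\to R/\mathfrak{p}\xrightarrow{\,x\,}R/\mathfrak{p}\to R/(\mathfrak{p}+xR)\to 0
\]
shows that $\mathrm{Ext}^{i}_{R}(R/\mathfrak{p},M)/x\mathrm{Ext}^{i}_{R}(R/\mathfrak{p},M)$ and $(0:_{\mathrm{Ext}^{i}_{R}(R/\mathfrak{p},M)}x)$ are each subquotients of $\mathrm{Ext}^{\ast}_{R}(R/(\mathfrak{p}+xR),M)$, hence lie in $\mathcal{C}^{1}(R,I)_{coa}$ by the $\dim 0$ case. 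The dual Melkersson-type characterisation of $I$-coartinianness developed in \cite{S2}, applied to the linearly compact module $\mathrm{Ext}^{i}_{R}(R/\mathfrak{p},M)$, then lifts the conclusion from these two quotients/submodules to $\mathrm{Ext}^{i}_{R}(R/\mathfrak{p},M)$ itself. Running this through the long exact sequence produced by the initial decomposition of $N$ gives the general statement.

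The role of linear compactness of $M$ is to guarantee that every $\mathrm{Ext}^{i}_{R}(N,M)$ inherits linear compactness when $N$ is finitely presented; this is precisely the hypothesis required to apply the dual Melkersson criterion to the Ext modules themselves. The main obstacle I anticipate is the bookkeeping needed to verify the magnitude bound $\mathrm{mag}_{R}\mathrm{Ext}^{i}_{R}(N,M)\leq 1$: one must track cosupports through the long exact sequences of Ext and convert the Krull-dimension hypothesis $\dim_{R}N\leq 1$ into the magnitude bound on the output, rather than a direct cosupport containment. Handling this compatibility between the covariant (dimension) and contravariant (magnitude) filtration data, in parallel with the artinianness of the Tor groups, is the delicate part of the argument.
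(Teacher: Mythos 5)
Your reduction collapses at the very first step. You claim that every $N\in\mathcal{C}^{1}(R,I)_{cof}$ fits into a short exact sequence $0\to A\to N\to N'\to 0$ with $A$ artinian and $N'$ finitely generated, and you then invoke a prime filtration of $N'$ to reduce to $N=R/\mathfrak{p}$. Neither claim is justified, and the first is simply false: the fact that $\mathcal{C}^{1}(R,I)_{cof}$ is an abelian category gives closure under kernels, cokernels and extensions, but says nothing about a structure theorem for its objects. Concretely, take $R=k[[x,y]]$, $I=(x)$, and $N=\mathrm{H}^{1}_{I}(R)=R_{x}/R$. This $N$ is $I$-cofinite of dimension $1$, but $y$ acts injectively on $R_{x}/R$, so $N$ has no nonzero artinian submodule; it is also not finitely generated. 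Hence $N$ admits neither your claimed extension nor a prime filtration, and the devissage to $R/\mathfrak{p}$ never gets off the ground. This is not a technicality: the hard case in the paper's own proof is precisely when $N$ satisfies $IN=N$ and is \emph{not} minimax, i.e.\ exactly the modules your reduction would eliminate.

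There is a second, independent gap in your dimension-one step. For $N=R/\mathfrak{p}\in\mathcal{C}^{1}(R,I)_{cof}$ one necessarily has $I\subseteq\mathfrak{p}$, so your chosen $x\in\mathfrak{m}\setminus\mathfrak{p}$ satisfies $x\notin I$. The dual Melkersson-type criterion you appeal to (as in \cite[Proposition 2.6]{S2}) concludes $I$-coartinianness of a linearly compact $E$ from $I$-coartinianness of $(0:_{E}x)$ and $E/xE$ only for $x\in I$; with $x\notin I$ it gives you nothing. The paper avoids both problems by a quite different route: it reduces to complete $R$, runs an induction on the arithmetic rank $t=\mathrm{ara}_{N}(I)$, passes to $L=I^{k}N$ with $IL=L$, splits off the minimax case via Corollary~\ref{cor:2.4}, and in the non-minimax case makes a delicate prime-avoidance choice of $x\in I$ using the attached primes of the localizations $N_{\mathfrak{p}_j}$ and the cosupports of $\mathrm{Ext}^{i}_{R}(N,M)$ at the dimension-one primes. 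It then proves directly that $(0:_{\mathrm{Ext}^{i}(N,M)}x)$ is minimax $I$-coartinian and that $\mathrm{Ext}^{i}(N,M)/x\mathrm{Ext}^{i}(N,M)$ is $I$-coartinian, before applying the $x$-criterion with $x\in I$. The parts of your sketch that do line up with the paper (reduction to $\widehat{R}$, linear compactness of the Ext modules, cosupport bookkeeping for the magnitude bound) are fine, but the core argument needs to be replaced.
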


\begin{thm}\label{thm:B}
Let $M$ be a semi-discrete linearly compact $R$-module and in $\mathcal{C}(R,I)_{coa}$ and $N$ a finitely generated $R$-module with $\mathrm{dim}_{R}N\leq 2$. Then the $R$-module $\mathrm{Tor}_{i}^{R}(N,M)$ is $I$-coartinian for $i\geq 0$.
\end{thm}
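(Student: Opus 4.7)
The plan is to induct on $\dim R/\p$ after reducing to the case of cyclic prime modules $N = R/\p$, and to complete the induction by a Melkersson-type criterion for $I$-coartinianess. The reduction to $N = R/\p$ uses the standard prime filtration $0 = N_0 \subset N_1 \subset \cdots \subset N_t = N$ with $N_j/N_{j-1} \cong R/\p_j$ and $\p_j \in \supp_R N$, so $\dim R/\p_j \leq 2$, combined with the Serre-subcategory property of $\mathcal{C}(R,I)_{coa}$ (a direct consequence of the long-exact $\tor$-sequence definition applied in the variable $M$).

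For the base case $\dim R/\p = 0$, that is $N = R/\fm$, I would compute $\tor_i^R(R/\fm, M)$ using a free resolution of $R/\fm$ with finitely generated free terms: each $\tor_i^R(R/\fm, M)$ appears as a subquotient of a finite direct sum of copies of $M$, so the Serre property of $\mathcal{C}(R,I)_{coa}$ delivers coartinianess.

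For the inductive step $1 \leq \dim R/\p \leq 2$, pick $x \in \fm \setminus \p$; then $x$ is $R/\p$-regular and $\dim R/(\p + xR) < \dim R/\p$. The short exact sequence
\[
0 \to R/\p \xrightarrow{x} R/\p \to R/(\p + xR) \to 0
\]
yields, via the long exact $\tor$-sequence and the inductive hypothesis applied to $R/(\p + xR)$, that both $\tor_i^R(R/\p, M)/x\tor_i^R(R/\p, M)$ and $(0 :_{\tor_i^R(R/\p, M)} x)$ are $I$-coartinian for every $i$. One then concludes via a Melkersson-type criterion that each $T_i := \tor_i^R(R/\p, M)$ is itself $I$-coartinian.

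The principal obstacle lies in supplying this Melkersson criterion: from the coartinianess of $T/xT$ and $(0 :_T x)$ one must deduce that $T$ is $I$-coartinian. Because $R/\p$ has a resolution by finitely generated free modules and $M$ is semi-discrete linearly compact, each $T_i$ inherits a semi-discrete linearly compact structure from $M$, which is exactly the structural input needed; using the inverse-limit description afforded by semi-discreteness together with the already-known coartinianess of $T/xT$ and $(0 :_T x)$, one controls $\operatorname{Cosupp}_R T$ and verifies artinianess of $\tor_j^R(R/I, T)$ inductively in $j$, completing the argument.
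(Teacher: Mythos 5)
There is a genuine gap, and it sits in your inductive step. You take $x\in\fm\setminus\p$, but the Melkersson-type criterion you want to invoke --- the coartinian analogue of Melkersson's criterion, which in this paper is \cite[Proposition 2.6]{S2} and is used at exactly this juncture in the published proof --- requires $x\in I$, not merely $x\in\fm$. Knowing that $(0:_T x)$ and $T/xT$ are $I$-coartinian for some $x\in\fm$ does \emph{not} yield that $T$ is $I$-coartinian: $I$-coartinianess is a statement about $\tor_j^R(R/I,T)$, and the only mechanism for passing from $x$-multiplication data on $T$ to $\tor_j^R(R/I,T)$ runs through the surjection $T/xT\onto T/IT$ and the ensuing Koszul bootstrap, both of which need $x\in I$. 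Worse, the slip cannot be fixed by simply insisting $x\in I\setminus\p$: after your prime filtration you must handle primes $\p\supseteq I$ (which occur whenever $N$ has support meeting $V(I)$), and for such $\p$ no element of $I$ lies outside $\p$, so the reduction by an $R/\p$-regular element of $I$ breaks down entirely.

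The paper avoids this by a different opening move: rather than a prime filtration, it splits off the $I$-torsion via $0\to\Gamma_I(N)\to N\to N/\Gamma_I(N)\to 0$. For the finitely generated $I$-torsion piece $\Gamma_I(N)$, the modules $\tor_i^R(\Gamma_I(N),M)$ are already artinian, hence coartinian, by \cite[Theorem 2.2.10]{F}; for $N/\Gamma_I(N)$, the ideal $I$ now contains an $N/\Gamma_I(N)$-regular element, and the dimension-lowering induction (Lemma \ref{lem:2.8} in dimension one, then Theorem \ref{thm:B}) proceeds with $x\in I$ as the criterion demands. You should also be more careful with the claim that $\mathcal{C}(R,I)_{coa}$ is a Serre subcategory ``as a direct consequence of the long-exact $\tor$-sequence'': the long exact sequence gives closure under extensions, but closure under submodules and quotients does not follow and the paper never asserts it. What the paper actually uses are the smaller classes $\mathcal{S}$ and $\mathcal{S}'$ of Lemma \ref{lem:2.2} (finitely generated, resp.\ minimax, linearly compact $I$-coartinian modules), whose Serre property is proved there and rests on exactly the minimax / linearly compact structure that, as you correctly note, $\tor_i^R(N,M)$ inherits from $M$; your filtration argument would need this same structural control to push coartinianess through the connecting maps.
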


Next we list some notions which will need later.

Throughout this paper, $(R,\mathfrak{m})$ is a commutative noetherian local ring with $\mathfrak{m}$-adic topology and $I$ an ideal of $R$. Write $\mathrm{Spec}R$ for the set of prime ideals of $R$, and set
\begin{center}$\mathrm{V}(I)=\{\mathfrak{p}\in \mathrm{Spec}R\hspace{0.03cm}|\hspace{0.03cm}I\subseteq \mathfrak{p}\}.$\end{center}
Fix $\mathfrak{p}\in \mathrm{Spec}R$, $M_{\mathfrak{p}}$ denote the localization of $M$ at $\mathfrak{p}$.
The duality functor $\mathrm{Hom}_{R}(-,E(R/\mathfrak{m}))$ is denoted by $D(-)$, where $E(R/\mathfrak{m})$ is the injective envelope of $R/\mathfrak{m}$.

{\bf Associated prime and attached prime.} Let $M$ be an $R$-module. The set of \emph{associated prime} of $M$, denoted by $\mathrm{Ass}_{R}M$, is the set of prime ideals $\mathfrak{p}$ such that there exists a cyclic submodule $N$ of $M$ with $\mathfrak{p}=\mathrm{Ann}_{R}N$, the annihilator of $N$.

A prime ideal $\mathfrak{p}$ is said to be an \emph{attached prime} of $M$ if $\mathfrak{p}=\mathrm{Ann}_{R}(M/L)$ for some submodule $L$ of $M$. The set of attached primes of $M$ is denoted by $\mathrm{Att}_{R}M$. If $M$ is artinian, then $M$ admits a minimal secondary representation
$M=M_{1}+\cdots+M_{r}$ so that $M_{i}$ is $\mathfrak{p}_{i}$-secondary for $i=1,\cdots,r$. In this case, $\mathrm{Att}_{R}M=\{\mathfrak{p}_{1},\cdots,\mathfrak{p}_{r}\}$.

{\bf Dimension and Magnitude.} The \emph{support} of an $R$-module $M$, denoted by $\mathrm{Supp}_{R}M$, is defined as the set of prime ideals of $\mathfrak{p}$ such that there is a cyclic submodule $N$ of $M$ with $\mathrm{Ann}_{R}N\subseteq \mathfrak{p}$.
The \emph{(Krull) dimension} of $M$ is
\begin{center}
$\mathrm{dim}_{R}M=\mathrm{sup}\{\mathrm{dim}R/\mathfrak{p}\hspace{0.03cm}|\hspace{0.03cm}\mathfrak{p}\in \mathrm{Supp}_{R}M\}$.
\end{center}
If $M=0$, then write $\mathrm{dim}_{R}M=-\infty$. If $\mathrm{dim}_{R}M$ is finite, then denote $\mathrm{Assh}_{R}M=\{\mathfrak{p}\in \mathrm{Ass}_{R}M \hspace{0.03cm}|\hspace{0.03cm}\mathrm{dim}R/\mathfrak{p}=\mathrm{dim}_{R}M\}$.

Yassemi \cite{Y1} defined the cocyclic modules. An $R$-module $L$ is \emph{cocyclic} if $L$ is a submodule of $E(R/\mathfrak{m})$. The \emph{cosupport} of $M$ is defined as the set of prime ideals $\mathfrak{p}$ such that there is a cocyclic homomorphic image $L$ of $M$ with $\mathfrak{p}\supseteq \mathrm{Ann}_{R}L$, and denoted this set by $\mathrm{Cosupp}_{R}M$.
He \cite{Y2} then introduced a dual concept of dimension for modules, called it \emph{magnitude} of modules,  and is defined as
\begin{center}$\begin{aligned}
\mathrm{mag}_{R}M=\mathrm{sup}\{\mathrm{dim}R/\mathfrak{p}\hspace{0.03cm}|\hspace{0.03cm}\mathfrak{p}\in \mathrm{Cosupp}_{R}M\}.
\end{aligned}$\end{center}
If $M=0$, then write $\mathrm{mag}_{R}M=-\infty$.

Following \cite{M}, a topological $R$-module $M$ is said to be \textit{linearly topologized} if it has a base of neighborhoods of the zero element $\mathcal{M}$ consisting of submodules; $M$ is called \textit{Hausdorff} if the intersection of all the neighborhoods of the zero element is $0$. A Hausdorff linearly topologized $R$-module $M$ is said to be \textit{linearly compact} if $\mathcal{F}$ is a family of closed cosets (i.e., cosets of closed submodules) in $M$ which has the finite intersection property, then the cosets in $\mathcal{F}$ have a non-empty intersection. It should be noted that an artinian $R$-module is linearly compact. A Hausdorff linearly topologized $R$-module $M$ is called \emph{semi-discrete} if every submodule of $M$ is closed. The class of semi-discrete linearly compact modules is very large, it contains many important classes of modules such as the class of artinain modules, the class of finitely generated modules over a complete ring.

\bigskip
\section{\bf Main results }
\renewcommand{\thethm}{\arabic{section}.\arabic{thm}}

The task of this section is to give the proof of Theorems \ref{thm:A} and \ref{thm:B} in introduction. We begin with the following auxiliary lemmas.

\begin{lem}\label{lem:2.6}
Let $M$ be a finitely generated $R$-module. Suppose that $x$ is an
element in $R$ such that $\mathrm{V}(xR)\cap \mathrm{Ass}_{R}M\subseteq \{\mathfrak{m}\}$. Then $(0:_{M}x)$ has finite length.
\end{lem}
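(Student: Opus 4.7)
The goal is to show that $N:=(0:_{M}x)$ is a finitely generated $R$-module whose support is contained in $\{\mathfrak{m}\}$, since this forces $N$ to have finite length over the local ring $(R,\mathfrak{m})$.

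First, $N$ is a submodule of the noetherian $R$-module $M$, hence finitely generated. So the whole task reduces to verifying $\mathrm{Supp}_{R}N\subseteq\{\mathfrak{m}\}$.

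The plan is to take an arbitrary $\mathfrak{p}\in\mathrm{Supp}_{R}N$ and show $\mathfrak{p}=\mathfrak{m}$. Since localization is exact, $N_{\mathfrak{p}}=(0:_{M_{\mathfrak{p}}}x/1)$, so $\mathfrak{p}\in\mathrm{Supp}_{R}N$ means that $x/1$ is a zero-divisor on the finitely generated $R_{\mathfrak{p}}$-module $M_{\mathfrak{p}}$. Because $R_{\mathfrak{p}}$ is noetherian and $M_{\mathfrak{p}}$ is finitely generated, the zero-divisors on $M_{\mathfrak{p}}$ coincide with $\bigcup\mathrm{Ass}_{R_{\mathfrak{p}}}M_{\mathfrak{p}}$, so there exists $\mathfrak{q}\in\mathrm{Ass}_{R}M$ with $\mathfrak{q}\subseteq\mathfrak{p}$ and $x\in\mathfrak{q}$. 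Thus $\mathfrak{q}\in\mathrm{V}(xR)\cap\mathrm{Ass}_{R}M\subseteq\{\mathfrak{m}\}$, which forces $\mathfrak{q}=\mathfrak{m}$ and hence $\mathfrak{p}=\mathfrak{m}$, as desired.

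There is no real obstacle here; the argument is a direct application of two standard facts about finitely generated modules over a noetherian ring: the behavior of associated primes under localization, and the identification of the zero-divisor set with the union of associated primes. Once $\mathrm{Supp}_{R}N\subseteq\{\mathfrak{m}\}$ is established, finite length follows because $N$ is a finitely generated module supported only at the maximal ideal of a noetherian local ring.
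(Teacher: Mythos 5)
Your proof is correct. The paper's own argument is a bit more compact: it writes $(0:_{M}x)\cong\mathrm{Hom}_{R}(R/xR,M)$ and invokes the standard identity $\mathrm{Ass}_{R}\mathrm{Hom}_{R}(R/xR,M)=\mathrm{V}(xR)\cap\mathrm{Ass}_{R}M$ to get $\mathrm{Ass}_{R}(0:_{M}x)\subseteq\{\mathfrak{m}\}$ directly, then concludes finite length from finite generation. Your route reaches the same control on associated primes the hard way, by localizing at an arbitrary $\mathfrak{p}\in\mathrm{Supp}_{R}(0:_{M}x)$, noting that $x/1$ is then a zero-divisor on $M_{\mathfrak{p}}$, and using that zero-divisors on a finitely generated module over a noetherian ring lie in the union of its associated primes together with the good behavior of $\mathrm{Ass}$ under localization. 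Both arguments are really the same observation — that the relevant associated primes of $M$ containing $x$ are forced to be $\mathfrak{m}$ — but yours unpacks the $\mathrm{Ass}$-of-$\mathrm{Hom}$ formula rather than citing it, which makes it slightly longer but more self-contained.
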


\begin{proof}
Since $(0:_{M}x)\cong \mathrm{Hom}_{R}(R/xR,M)$, $\mathrm{Ass}_{R}\mathrm{Hom}_{R}(R/xR,M)=\mathrm{V}(xR)\cap \mathrm{Ass}_{R}M\subseteq \{\mathfrak{m}\}$ and $(0:_{M}x)$ is finitely generated, we yield $(0:_{M}x)$ is artinian. Hence it has finite length.
\end{proof}

Following \cite{Z1}, an $R$-module $N$ is \emph{minimax} if there is a finitely generated submodule $L$ of $N$, such that $N/L$ is
artinian. By \cite[Lemma 2.1]{BN0}, the class of minimax $R$-modules is a Serre subcategory of the category of $R$-modules, i.e. closed under taking submodules, quotients and extensions.
Let $\mathcal{S}$ denote the subclass of linearly compact $R$-modules which consists of all finitely generated $I$-coartinian $R$-modules, and $\mathcal{S'}$ denote the subclass of linearly compact $R$-modules which consists of all minimax $I$-coartinian $R$-modules. We have

\begin{lem}\label{lem:2.2}
$\mathcal{S}$ and $\mathcal{S'}$ are both Serre subcategories of the category of $R$-modules.
\end{lem}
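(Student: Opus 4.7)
The plan is to exploit the observation that each of $\mathcal{S}$ and $\mathcal{S}'$ is an intersection of two classes already known to be Serre subcategories: for $\mathcal{S}$, the finitely generated $R$-modules together with the $I$-coartinian $R$-modules, and for $\mathcal{S}'$, the minimax $R$-modules together with the $I$-coartinian $R$-modules. The finitely generated modules form a Serre subcategory since $R$ is noetherian, and the minimax modules form one by \cite[Lemma~2.1]{BN0}, cited just above. Since the intersection of two Serre subcategories is Serre, the problem reduces to showing that the class of $I$-coartinian $R$-modules is a Serre subcategory.

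To establish this, I would take a short exact sequence $0\to M'\to M\to M''\to 0$ and check the two defining conditions of $I$-coartinianness on each term. For the cosupport condition $\mathrm{Cosupp}_R(-)\subseteq\mathrm{V}(I)$, I would use the identity $\mathrm{Cosupp}_R M=\mathrm{Cosupp}_R M'\cup \mathrm{Cosupp}_R M''$ (the cosupport analogue of the familiar support identity, obtained by pushing cocyclic quotients around via the injectivity of $E(R/\mathfrak{m})$ in the framework of Yassemi \cite{Y1}); this makes the condition preserved under submodule, quotient, and extension. For the artinianness of $\mathrm{Tor}_i^R(R/I,-)$ in all $i\geq 0$, I would apply the long exact sequence
\[\cdots\to\mathrm{Tor}_{i+1}^R(R/I,M'')\to\mathrm{Tor}_i^R(R/I,M')\to\mathrm{Tor}_i^R(R/I,M)\to\mathrm{Tor}_i^R(R/I,M'')\to\cdots\]
together with the Serre property of artinian $R$-modules.

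The hard part will be the submodule/quotient direction for the Tor condition, since the long exact sequence couples $M'$ and $M''$, and knowing only that $M$ is $I$-coartinian does not at first sight split artinianness between the two outer terms. I would resolve this by invoking the ambient finitely generated or minimax hypothesis inside $\mathcal{S}$ (resp.\ $\mathcal{S}'$): this forces the relevant Tor modules themselves to be finitely generated (resp.\ minimax), so that being artinian becomes equivalent to the pure support condition $\mathrm{Supp}_R\cap\mathrm{V}(I)\subseteq\{\mathfrak{m}\}$, which is manifestly preserved under submodule, quotient, and extension. Combined with the cosupport step above, this gives Serre closure for both $\mathcal{S}$ and $\mathcal{S}'$.
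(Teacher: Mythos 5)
Your approach is essentially the paper's: the crux in both is that, for a finitely generated (resp.\ minimax) module $M$ with $\mathrm{Cosupp}_R M\subseteq\mathrm{V}(I)$, $I$-coartinianness reduces to $M/IM$ being artinian, i.e.\ to the condition $\mathrm{Supp}_R M\cap\mathrm{V}(I)\subseteq\{\mathfrak{m}\}$, which is manifestly stable under submodules, quotients, and extensions; the paper obtains this reduction by citing \cite[Lemma~3.5]{N1} rather than rederiving it as you do. Two cautions worth noting: the opening claim that $\mathcal{S}$ is literally an intersection of two Serre subcategories overreaches, since the full class of $I$-coartinian $R$-modules is never shown to be Serre (your own argument sensibly retreats to the relative statement inside the finitely generated, resp.\ minimax, modules); and the cosupport identity $\mathrm{Cosupp}_R M=\mathrm{Cosupp}_R M'\cup\mathrm{Cosupp}_R M''$ is invoked in the paper only for linearly compact modules, which is part of the definitions of $\mathcal{S}$ and $\mathcal{S'}$ that your intersection decomposition drops.
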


\begin{proof}
Let
$0\rightarrow M_{1}\rightarrow M\rightarrow M_{2}\rightarrow 0$
be a short exact sequence of linearly compact $R$-modules. Then $\mathrm{Cosupp}_{R}M=\mathrm{Cosupp}_{R}M_{1}\cup \mathrm{Cosupp}_{R}M_{2}$.
If $M_{1}$ and $M_{2}$ are finitely generated $I$-coartinian, then $M$ is finitely generated $I$-coartinian. Now assume that $M$ is finitely generated $I$-coartinian. It follows from the artinianess of $M/IM$ and the following exact sequence
$$M_{1}/IM_{1}\rightarrow M/IM\rightarrow M_{2}/IM_{2}\rightarrow 0.$$
 that $M_{2}/IM_{2}$ is artinian. Also $M_{2}$ is finitely generated, it follows from \cite[Lemma 3.5]{N1} that $M_{2}$ is $I$-coartinian, and so $M_{1}$ is $I$-coartinian. Similarly, one can show that $\mathcal{S'}$ is a Serre subcategory of the category of $R$-modules.
\end{proof}

Recall the \emph{arithmetic rank of an ideal $I$ in $R$}, denoted by $\mathrm{ara}(I)$, is the least number of elements of $R$ required to generate an ideal which has the same radical as $I$, i.e.
$$\mathrm{ara}(I)=\min \{n\in \mathbb{N}_{0}\hspace{0.03cm}|\hspace{0.03cm} \exists~ x_{1},\cdots,x_{n}\in R\ \mathrm{with}\ \mathrm{Rad}(x_{1},\cdots, x_{n})=\mathrm{Rad}(I)\}.$$
\noindent{}Let $M$ be an $R$-module. The \emph{arithmetic rank of an ideal $I$ in $R$ with respect to $M$}, denoted by $\mathrm{ara}_{M}(I)$, is defined as the arithmetic rank of the ideal
$(I+\mathrm{Ann}_{R}M)/\mathrm{Ann}_{R}M$ in the ring $R/\mathrm{Ann}_{R}M$.

\begin{lem}\label{lem:2.3}
Let $M$ be a linearly compact $R$-module and $n\geq 0$. Then the following statements are equivalent:

$\mathrm{(1)}$ For each non-zero artinian $I$-cofinite $R$-module $N$ and $0\leq i\leq n$, $\mathrm{Ext}_{R}^{i}(N,M)\in \mathcal{S}$.

$\mathrm{(2)}$ $\mathrm{Ext}_{R}^{i}(R/\mathfrak{m},M)$ is artinian for $0\leq i\leq n$.

$\mathrm{(3)}$ For each R-module $N$ of finite length and $0\leq i\leq n$, $\mathrm{Ext}_{R}^{i}(N,M)$ is artinian.
\end{lem}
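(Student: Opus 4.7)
The plan is to prove the cycle $(1)\Rightarrow(2)\Rightarrow(3)\Rightarrow(1)$. The implication $(1)\Rightarrow(2)$ follows by applying $(1)$ to $N=R/\m$, which is a non-zero artinian $I$-cofinite $R$-module (its support is $\{\m\}\subseteq\mathrm{V}(I)$, and each $\ext^j_R(R/I,R/\m)$ is a finite-dimensional $R/\m$-vector space); then $\ext^i_R(R/\m,M)\in\mathcal{S}$ is finitely generated and supported in $\{\m\}$, hence of finite length and in particular artinian. The implication $(2)\Rightarrow(3)$ is a routine induction on $\len_R N$: the base $\len_R N=1$ is exactly $(2)$, and for $\len_R N\geq 2$ a simple submodule yields a short exact sequence $0\to R/\m\to N\to N''\to 0$ whose associated long exact sequence under $\ext^*_R(-,M)$ exhibits $\ext^i_R(N,M)$ as an extension of a quotient of $\ext^i_R(N'',M)$ by a subobject of $\ext^i_R(R/\m,M)$, both artinian by induction and $(2)$.

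For the substantial direction $(3)\Rightarrow(1)$, I would first strengthen $(3)$ to: whenever $N$ has finite length, $\ext^i_R(N,M)\in\mathcal{S}$ for $0\leq i\leq n$. Indeed, by $(3)$ the module $\ext^i_R(N,M)$ is artinian, and it is annihilated by any power of $\m$ annihilating $N$, making it an artinian module over the artinian local ring $R/\m^k$, hence of finite length. A finite-length $R$-module is finitely generated, linearly compact, and $I$-coartinian (its cosupport lies in $\{\m\}\subseteq\mathrm{V}(I)$ and each $\tor^R_j(R/I,-)$ of it has finite length), and therefore lies in $\mathcal{S}$.

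To pass from finite length to a general non-zero artinian $I$-cofinite $N$, I exploit that $\soc_R N$ is finitely generated: since $I\subseteq\m$, the surjection $R/I\onto R/\m$ induces an injection $\soc_R N=\Hom_R(R/\m,N)\hookrightarrow\Hom_R(R/I,N)$, and the target is finitely generated by $I$-cofiniteness of $N$. Hence $\soc_R N\cong(R/\m)^t$ has finite length $t$, each $N_k:=(0:_N\m^k)$ is artinian and annihilated by $\m^k$ so is of finite length, and $N=\bigcup_k N_k=\varinjlim_k N_k$. The previous stage then places every $\ext^i_R(N_k,M)$ in $\mathcal{S}$, and I would compare $\ext^i_R(N,M)$ with $\varprojlim_k\ext^i_R(N_k,M)$ via the Milnor-type exact sequence
\[0\to\varprojlim{}^{1}\ext^{i-1}_R(N_k,M)\to\ext^i_R(N,M)\to\varprojlim\ext^i_R(N_k,M)\to 0,\]
or equivalently through the long exact $\ext$-sequence induced by the embedding $0\to N\to E(R/\m)^t\to N'\to 0$, whose cokernel $N'$ is again a target for the same analysis.

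The main obstacle will be to show that the limiting construction of the second stage actually lands inside $\mathcal{S}$ rather than merely in the wider class of linearly compact modules. The linear compactness of $M$ is crucial here: it should force Mittag-Leffler-type vanishing of the $\varprojlim{}^{1}$ correction, and propagate both finite generation and $I$-coartinianness from the finite-length terms to the inverse limit. Combined with the Serre subcategory property of $\mathcal{S}$ established in Lemma \ref{lem:2.2}, this should yield $\ext^i_R(N,M)\in\mathcal{S}$ and complete $(3)\Rightarrow(1)$.
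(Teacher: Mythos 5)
Your implications $(1)\Rightarrow(2)$ and $(2)\Rightarrow(3)$ match the paper's argument in substance and are fine; the strengthening of $(3)$ to "finite length $N$ gives $\ext^i_R(N,M)\in\mathcal{S}$" is also correct and is essentially the $t=0$ base case of the paper's induction. The real issue is $(3)\Rightarrow(1)$, where you diverge from the paper's method, and where the gap you flag yourself is genuine and not repaired by linear compactness alone.

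Your plan writes $N=\varinjlim_k N_k$ with $N_k=(0:_N\m^k)$ of finite length, uses the Milnor sequence, kills $\varprojlim^1$ (correct: a tower of finite-length modules is Mittag-Leffler by DCC on images), and then must show $\varprojlim_k\ext^i_R(N_k,M)\in\mathcal{S}$, i.e.\ that this inverse limit is \emph{finitely generated} and $I$-coartinian. That last step is precisely what is missing. An inverse limit of finite-length modules is linearly compact, but it has no reason to be finitely generated: for instance $\varprojlim_k(R/\m)^k$ (with projections) is an infinite product of copies of the residue field. Nothing in the linear compactness of $M$ blocks this; what makes the lemma true is the interaction between $M$, the $I$-cofiniteness of $N$ and the structure of $\mathcal{S}$, and your construction discards the $I$-cofiniteness of $N$ once you pass to the $N_k$ (each $N_k$ sees only that $\soc_R N$ is finite). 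The alternative route you propose, $0\to N\to E(R/\m)^t\to N'\to 0$, also fails to close the loop: $N'$ is artinian, but it is $I$-cofinite only when $\Hom_R(R/I,E(R/\m)^t)$ is finitely generated, which forces $R/I$ to be artinian, i.e.\ $I$ to be $\m$-primary. So neither branch of the sketch currently yields $(3)\Rightarrow(1)$.

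The paper handles $(3)\Rightarrow(1)$ by a completely different induction: after reducing to $R$ complete, it inducts on $t=\mathrm{ara}_N(I)$. Prime avoidance over $\mathrm{Att}_R N\setminus\mathrm{V}(I)$ produces $x\in I$ with $\mathrm{Att}_R N\cap\mathrm{V}(xR)\subseteq\{\m\}$; setting $B=x^kN$ (where $x^kN=x^{k+1}N$) and $C=(0:_Bx)$, one gets $\mathrm{ara}_C(I)\leq t-1$, $N/B$ of finite length, and the two short exact sequences $0\to C\to B\xrightarrow{x}B\to 0$ and $0\to B\to N\to N/B\to 0$. The inductive hypothesis controls $\ext^i_R(C,M)$, the finite-length case controls $\ext^i_R(N/B,M)$, and the Serre-subcategory property of $\mathcal{S}$ (Lemma~\ref{lem:2.2}) together with the criterion of \cite[Proposition 2.6]{S2} (coartinianness detected by $(0:_-x)$ and $-/x(-)$) then pins down $\ext^i_R(B,M)$ and hence $\ext^i_R(N,M)$. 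This is the piece of machinery your sketch needs but does not supply: a way to reduce an infinite-length artinian $I$-cofinite $N$ to strictly smaller data while staying inside $\mathcal{S}$, and the arithmetic-rank induction plus the $(0:x)$/$(-/x)$ test is exactly that reduction. If you want to rescue your approach you would at minimum need a substitute for \cite[Proposition 2.6]{S2} that recognizes membership in $\mathcal{S}$ for the limit, and no such statement is available for an arbitrary tower of finite-length modules.
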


\begin{proof}
$\mathrm{(1)}\Rightarrow \mathrm{(2)}$ The assertion is clear as
$R/I\otimes_{R} \mathrm{Ext}_{R}^{i}(R/\mathfrak{m},M)\cong \mathrm{Ext}_{R}^{i}(R/\mathfrak{m},M)$.

$\mathrm{(2)}\Rightarrow \mathrm{(3)}$ Since $N$ has finite length, there is a finite filtration
$$0=N_{0}\subseteq N_{1}\subseteq \cdots\subseteq N_{k}=N,$$
such that $N_{i}/N_{i-1}\cong R/\mathfrak{m}$ for $0\leq i\leq k$. Now, using induction on $k$ and with the exact sequence
$$0\rightarrow N_{i-1}\rightarrow N_{i}\rightarrow N_{i}/N_{i-1}\rightarrow 0$$
for $0\leq i\leq k$, the result is easy to prove.

$\mathrm{(3)}\Rightarrow \mathrm{(1)}$ In view of \cite[Proposition 4.6]{N}, we can assume that $R$ is complete. Since $\mathrm{Ext}_{R}^{i}(N,M)$ is linearly compact by \cite[Lemma 2.5]{CN1}, we only need to show $\mathrm{Ext}_{R}^{i}(N,M)$ is finitely generated $I$-coartinian for $0\leq i\leq n$. Suppose that $N$ is artinian $I$-cofinite. Then $\mathrm{Supp}_{R}N\subseteq \{\mathfrak{m}\}$. In order to prove the assertion, we use induction on
$$t=\mathrm{ara}_{N}(I)=\mathrm{ara}(I+\mathrm{Ann}_{R}N)/\mathrm{Ann}_{R}N.$$
If $t=0$, then $I^{u}N=0$ for some $u>0$. As
$\mathrm{Hom}_{R}(R/I,N)$ is finitely generated, it follows that $N=\mathrm{Hom}_{R}(R/I^{u},N)$ is finitely generated. Therefore, $N$ has finite length and $\mathrm{Ext}_{R}^{i}(N,M)$ is artinian for $0\leq i\leq n$. Also by \cite[corollary 3.6]{N} and \cite[Lemma 2.5]{Y1}, we have
\begin{center}$\begin{aligned}
\mathrm{Supp}_{R}D(\mathrm{Ext}_{R}^{i}(N,M))
&= \mathrm{Cosupp}_{R}\mathrm{Ext}_{R}^{i}(N,M) \\
&\subseteq
\mathrm{Supp}_{R}N \cap \mathrm{Cosupp}_{R}M \\
&\subseteq
\mathrm{Supp}_{R}N
\subseteq
\{\mathfrak{m}\}.
\end{aligned}$\end{center}
Since $D(\mathrm{Ext}_{R}^{i}(N,M))$ is finitely generated, $D(\mathrm{Ext}_{R}^{i}(N,M))$ is artinian $I$-cofinite by \cite[Lemma 2.1]{M6}. Hence $\mathrm{Ext}_{R}^{i}(N,M)$ is finitely generated $I$-coartinian for $0\leq i\leq n$. Suppose, inductively, that $t>0$ and the result has been proved for smaller values of $t$. We can conclude that $N$ is not of finite length and so $\mathrm{Att}_{R}N\nsubseteq \{\mathfrak{m}\}$. By hypothesis $N$ is artinian $I$-cofinite, one has $\mathrm{Att}_{R}N \cap \mathrm{V}(I)\subseteq \{\mathfrak{m}\}$ by \cite[Lemma 2.3]{B1}. On the other hand, there exist elements $y_{1},\cdots,y_{t}\in I$ such that
$$\mathrm{Rad}(I+\mathrm{Ann}_{R}N/\mathrm{Ann}_{R}N)=\mathrm{Rad}((y_{1},\cdots,y_{t})+\mathrm{Ann}_{R}N/\mathrm{Ann}_{R}N).$$
By Prime Avoidance Theorem,
$$I\nsubseteq \bigcup_{\mathfrak{q}\in \mathrm{Att}_{R}N\backslash \mathrm{V}(I)}\mathfrak{q},$$
which implies that
$$(y_{1},\cdots,y_{t})+\mathrm{Ann}_{R}N \nsubseteq \bigcup_{\mathfrak{q}\in \mathrm{Att}_{R}N\backslash \mathrm{V}(I)}\mathfrak{q}.$$
But
$\mathrm{Ann}_{R}N\subseteq \mathop{\bigcap}_{\mathfrak{q}\in \mathrm{Att}_{R}N\backslash \mathrm{V}(I)}\mathfrak{q}$,
consequently,
$$(y_{1},\cdots,y_{t})\nsubseteq \bigcup_{\mathfrak{q}\in \mathrm{Att}_{R}N\backslash \mathrm{V}(I)}\mathfrak{q}.$$
By \cite[Theorem 16.8]{M5} there is $a\in (y_{2},\cdots,y_{t})$ such that
$y_{1}+a\notin \bigcup_{\mathfrak{q}\in \mathrm{Att}_{R}N\backslash \mathrm{V}(I)}\mathfrak{q}$. Let $x=y_{1}+a$. Then $x\in I$,
$\mathrm{Att}_{R}N\cap \mathrm{V}(xR)\subseteq \mathrm{Att}_{R}N \cap \mathrm{V}(I)\subseteq \{\mathfrak{m}\}$
and
$$\mathrm{Rad}(I+\mathrm{Ann}_{R}N/\mathrm{Ann}_{R}N)=\mathrm{Rad}((x,y_{2},\cdots,y_{t})+\mathrm{Ann}_{R}N/\mathrm{Ann}_{R}N).$$
As $N$ is artinian, it can be seen that $x^{k}N=x^{k+1}N$ for some $k>0$. Put $B=x^{k}N$ and $C=(0:_{B}x)$. Then
$$\mathrm{Rad}(I+\mathrm{Ann}_{R}C/\mathrm{Ann}_{R}C)=\mathrm{Rad}((y_{2},\cdots,y_{t})+\mathrm{Ann}_{R}C/\mathrm{Ann}_{R}C),$$
and hence $\mathrm{ara}(I+\mathrm{Ann}_{R}C)/\mathrm{Ann}_{R}C\leq t-1$. Since
$$\mathrm{Att}_{R}N\cap \mathrm{V}(x^{k}R)= \mathrm{Att}_{R}N\cap \mathrm{V}(xR)\subseteq \{\mathfrak{m}\},$$
it follows from \cite[Lemma 2.4]{BN} that $N/B=N/x^{k}N$ has finite length. In
view of condition, $\mathrm{Ext}_{R}^{i}(N/B,M)\in \mathcal{S}$ for
$0\leq i\leq n$. Note that $B$ and $C$ are both artinian $I$-cofinite and there is a short exact sequence
$$\xymatrix{0\ar[r] & C\ar[r] & B\ar[r]^{x} & B\ar[r] & 0.}$$
By \cite[Lemma 2.5]{CN1}, we have the long exact sequence of linearly compact $R$-modules
$$\xymatrix@C=0.5cm{\cdots\ar[r] & \mathrm{Ext}_{R}^{i-1}(C,M)\ar[r] & \mathrm{Ext}_{R}^{i}(B,M)\ar[r]^{x} &  \mathrm{Ext}_{R}^{i}(B,M)\ar[r] & \mathrm{Ext}_{R}^{i}(C,M)\ar[r] & }$$
$$\xymatrix@C=0.5cm{\mathrm{Ext}_{R}^{i+1}(B,M)\ar[r]^{x} & \mathrm{Ext}_{R}^{i+1}(B,M)\ar[r] & \cdots.}$$
By inductive hypothesis, $\mathrm{Ext}_{R}^{i}(C,M)\in \mathcal{S}$ for $0 \leq i\leq n$, it follows from Lemma \ref{lem:2.2} that $(0:_{\mathrm{Ext}_{R}^{i}(B,M)}x)\in \mathcal{S}$ for $0 \leq i\leq n+1$ and $\mathrm{Ext}_{R}^{i}(B,M)/x\mathrm{Ext}_{R}^{i}(B,M)\in \mathcal{S}$ for $0 \leq i\leq n$. Hence $\mathrm{Ext}_{R}^{i}(B,M)$ is $I$-coartinian by \cite[Proposition 2.6]{S2}, and so $\mathrm{Ext}_{R}^{i}(B,M)\in \mathcal{S}$ for $0\leq i\leq n$. Also the exact sequence
$$0\rightarrow B\rightarrow N\rightarrow N/B\rightarrow 0$$
yields the long exact sequence of linearly compact $R$-modules
$$\xymatrix@C=0.5cm{\cdots\ar[r] & \mathrm{Ext}_{R}^{i}(N/B,M)\ar[r] & \mathrm{Ext}_{R}^{i}(N,M)\ar[r] & \mathrm{Ext}_{R}^{i}(B,M)\ar[r] & \cdots,}$$
which implies that $\mathrm{Ext}_{R}^{i}(N,M)\in \mathcal{S}$ for $0\leq i\leq n$. This completes the inductive step.
\end{proof}

\begin{cor}\label{cor:2.4}
Let $M$ be a linearly compact $R$-module and $n\geq0$. Then the following statements are equivalent:

$\mathrm{(1)}$ For each minimax $I$-cofinite $R$-module $N$ and $0\leq i\leq n$, $\mathrm{Ext}_{R}^{i}(N,M)$ is minimax $I$-coartinian.

$\mathrm{(2)}$ For $0\leq i\leq n$, $\mathrm{Ext}_{R}^{i}(R/I,M)$ is artinian.
\end{cor}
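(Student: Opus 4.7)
The plan is to handle the two implications separately, with $(2)\Rightarrow(1)$ carrying the real content. For $(1)\Rightarrow(2)$, I would apply the hypothesis to $N=R/I$, which is finitely generated and $I$-cofinite (hence minimax $I$-cofinite). This yields that $\mathrm{Ext}^{i}_{R}(R/I,M)$ is minimax $I$-coartinian, and since $I$ annihilates this module one has $R/I\otimes_{R}\mathrm{Ext}^{i}_{R}(R/I,M)\cong\mathrm{Ext}^{i}_{R}(R/I,M)$; the $I$-coartinianness then forces the left side to be artinian.

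For $(2)\Rightarrow(1)$, given a minimax $I$-cofinite module $N$, I would pick a finitely generated submodule $L\subseteq N$ with $N/L$ artinian. Since $R$ is noetherian, $R/I$ has a resolution by finitely generated free $R$-modules, so $\mathrm{Ext}^{i}_{R}(R/I,L)$ is a subquotient of $L^{n_{i}}$ and hence finitely generated; thus $L$ is $I$-cofinite, and the long exact sequence of $\mathrm{Ext}_{R}(R/I,-)$ applied to $0\to L\to N\to N/L\to 0$ forces $N/L$ to be $I$-cofinite as well. With $L$ finitely generated $I$-cofinite and $N/L$ artinian $I$-cofinite, it suffices to show that both $\mathrm{Ext}^{i}_{R}(L,M)$ and $\mathrm{Ext}^{i}_{R}(N/L,M)$ lie in $\mathcal{S}'$ and then reassemble via Lemma~\ref{lem:2.2} and the long exact sequence of $\mathrm{Ext}_{R}(-,M)$.

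The technical core is to upgrade hypothesis $(2)$ to artinianness of $\mathrm{Ext}^{i}_{R}(L,M)$ for every finitely generated $L$ with $\mathrm{Supp}_{R}L\subseteq\mathrm{V}(I)$. For this I would invoke the change-of-rings (Grothendieck) spectral sequence
$$E_{2}^{p,q}=\mathrm{Ext}^{p}_{R/I}\bigl(L,\mathrm{Ext}^{q}_{R}(R/I,M)\bigr)\Longrightarrow\mathrm{Ext}^{p+q}_{R}(L,M),$$
valid when $L$ is an $R/I$-module. When $L$ is finitely generated over the noetherian ring $R/I$, a resolution of $L$ by finitely generated free $R/I$-modules makes each $E_{2}^{p,q}$ a subquotient of a finite direct sum of copies of $\mathrm{Ext}^{q}_{R}(R/I,M)$, which is artinian for $q\leq n$ by hypothesis; hence $\mathrm{Ext}^{i}_{R}(L,M)$ is artinian for $0\leq i\leq n$. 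A short induction on $k$ with $I^{k}L=0$, using $0\to IL\to L\to L/IL\to 0$, removes the $R/I$-module assumption and settles the claim for every finitely generated $L$ with support in $\mathrm{V}(I)$. In particular, taking $L=R/\mathfrak{m}$ verifies condition $(2)$ of Lemma~\ref{lem:2.3} and delivers $\mathrm{Ext}^{i}_{R}(N/L,M)\in\mathcal{S}\subseteq\mathcal{S}'$; combined with $\mathrm{Ext}^{i}_{R}(L,M)$ artinian (a fortiori in $\mathcal{S}'$), Lemma~\ref{lem:2.2} closes the argument.

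The main obstacle I anticipate is precisely this spectral sequence step: moving from a single artinianness condition on $\mathrm{Ext}^{i}_{R}(R/I,M)$ to artinianness of $\mathrm{Ext}^{i}_{R}(L,M)$ for all finitely generated $L$ annihilated by a power of $I$. Once that piece is in place, everything else is formal bookkeeping inside the Serre subcategories produced by Lemmas~\ref{lem:2.2} and \ref{lem:2.3}.
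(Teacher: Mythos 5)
Your overall route coincides with the paper's: split $N$ via a short exact sequence $0\to L\to N\to N/L\to 0$ with $L$ finitely generated and $N/L$ artinian $I$-cofinite, show $\mathrm{Ext}^{i}_{R}(L,M)$ is artinian and $\mathrm{Ext}^{i}_{R}(N/L,M)\in\mathcal{S}$ (via Lemma~\ref{lem:2.3}), then reassemble through the long exact sequence and the Serre property of Lemma~\ref{lem:2.2}. Where you genuinely diverge is in how you pass from hypothesis (2) to artinianness of $\mathrm{Ext}^{i}_{R}(L,M)$ and $\mathrm{Ext}^{i}_{R}(R/\mathfrak{m},M)$: the paper simply cites \cite[Lemma 2.2]{S2} for this implication, whereas you re-derive it in-house from the change-of-rings spectral sequence $E_{2}^{p,q}=\mathrm{Ext}^{p}_{R/I}(L,\mathrm{Ext}^{q}_{R}(R/I,M))\Rightarrow\mathrm{Ext}^{p+q}_{R}(L,M)$ together with an induction on the nilpotency index of $I$ on $L$. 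That spectral-sequence argument is correct (one only needs $q\leq n$, which is all hypothesis (2) supplies) and makes the corollary more self-contained, at the cost of some extra machinery. Likewise you prove that $N/L$ is $I$-cofinite directly from the long exact sequence of $\mathrm{Ext}_{R}(R/I,-)$, where the paper cites \cite[Corollary 4.4]{M2}; both are fine.

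There is, however, one genuine gap. You assert that $\mathrm{Ext}^{i}_{R}(L,M)$ being artinian places it \emph{a fortiori} in $\mathcal{S}'$. That is not automatic: membership in $\mathcal{S}'$ requires, beyond being linearly compact and minimax (which artinian modules certainly are), being $I$-coartinian, and $I$-coartinianness carries the extra condition $\mathrm{Cosupp}_{R}(-)\subseteq\mathrm{V}(I)$, which an arbitrary artinian module can fail (for instance $E(R/\mathfrak{m})$ whenever $\mathrm{V}(I)\neq\mathrm{Spec}\,R$). The paper closes this by noting
\[
\mathrm{Cosupp}_{R}\mathrm{Ext}^{i}_{R}(L,M)\subseteq\mathrm{Supp}_{R}L\cap\mathrm{Cosupp}_{R}M\subseteq\mathrm{Supp}_{R}L\subseteq\mathrm{V}(I),
\]
using \cite[Corollary 3.6]{N}. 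You need to insert this (or an equivalent) cosupport check before the final appeal to Lemma~\ref{lem:2.2}; otherwise one of the two pieces fed into the long exact sequence has not actually been certified to lie in $\mathcal{S}'$, and the conclusion that $\mathrm{Ext}^{i}_{R}(N,M)$ is minimax $I$-coartinian does not yet follow.
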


\begin{proof}
$\mathrm{(1)}\Rightarrow \mathrm{(2)}$ Using the fact that $I\mathrm{Ext}_{R}^{i}(R/I,M)=0$ for
$0\leq i\leq n$.

$\mathrm{(2)}\Rightarrow \mathrm{(1)}$ Let $N$ be a minimax $I$-cofinite $R$-module. There exists a finitely generated submodule $L$ of $N$, such that $N/L$ is artinian. By \cite[Corollary 4.4]{M2}, $N/L$ is $I$-cofinite. By hypothesis and \cite[Lemma 2.2]{S2}, $\mathrm{Ext}_{R}^{i}(R/\mathfrak{m},M)$ is artinian for $0\leq i\leq n$. Hence Lemma \ref{lem:2.3} implies that $\mathrm{Ext}_{R}^{i}(N/L,M)\in \mathcal{S}$ for $0\leq i\leq n$. Also by \cite[Lemma 2.2]{S2}, $\mathrm{Ext}_{R}^{i}(L,M)$ is artinian for $0\leq i\leq n$, and note that
$$\mathrm{Cosupp}_{R}\mathrm{Ext}_{R}^{i}(L,M)\subseteq \mathrm{Supp}_{R}L\cap \mathrm{Cosupp}_{R}M\subseteq \mathrm{V}(I).$$
Now, the induced long exact sequence
$$\cdots\rightarrow \mathrm{Ext}_{R}^{i}(L,M)\rightarrow \mathrm{Ext}_{R}^{i}(N,M)\rightarrow \mathrm{Ext}_{R}^{i}(N/L,M)\rightarrow \cdots$$
implies that $\mathrm{Ext}_{R}^{i}(N,M)$ is minimax $I$-coartinian for $0\leq i\leq n$.
\end{proof}

We now give the proof of the first theorem.

\begin{proof}[Proof of Theorem \ref{thm:A}]
Since $\widehat{R}$ is a faithful flat $R$-module and $N\in \mathcal{C}^{1}(R,I)_{cof}$, we have $N\otimes_{R}\widehat{R}\in \mathcal{C}^{1}(\widehat{R},I\widehat{R})_{cof}$. In view of $\mathrm{Ext}_{R}^{i}(N,M)$ is a linearly compact $\widehat{R}$-module,
$\mathrm{Ext}_{\widehat{R}}^{i}(N\otimes_{R}\widehat{R}, M)\cong \mathrm{Ext}_{R}^{i}(N,M)$ and \cite[Proposition 4.6]{N}, we can assume that $R$ is complete. Since  $\mathrm{Cosupp}_{R}\mathrm{Ext}_{R}^{i}(N,M)\subseteq \mathrm{Supp}_{R}N \cap \mathrm{Cosupp}_{R}M \subseteq \mathrm{Supp}_{R}N$ by \cite[Corollary 3.6]{N},
$\mathrm{mag}_{R}\mathrm{Ext}_{R}^{i}(N,M)\leq 1$. In order to prove the assertion, we use
induction on
$$t=\mathrm{ara}_{N}(I)=\mathrm{ara}(I+\mathrm{Ann}_{R}N)/\mathrm{Ann}_{R}N.$$
If $t=0$, then $\mathrm{Hom}_{R}(R/I^{l},N)=N$ is finitely generated for some integer $l$, it follows from \cite[Proposition 3.3]{B} and \cite[Lemma 2.2]{S2} that $\mathrm{Ext}_{R}^{i}(N,M)$ is artinian for $i\geq 0$. Hence
$\mathrm{Ext}_{R}^{i}(N,M) \in \mathcal{C}^{1}(R,I)_{coa}$ for $i\geq 0$. Suppose, inductively, that $t>0$ and the result has been proved for smaller values of $t$. Since $N/IN$ is finitely generated, there is a finitely generated submodule $N_{1}$ of $N$ such that
$N/IN=(N_{1}+IN)/IN$,
one has $N=N_{1}+IN$. Since $N_{1}$ is an $I$-torsion finitely generated $R$-module, it follows that
$I^{k}N_{1}=0$ for some $k>0$. Therefore, $I^{k}N=I^{k}N_{1}+I^{k+1}N=I^{k+1}N$. Set $L=I^{k}N$. Then $IL=L\in \mathcal{C}^{1}(R,I)_{cof}$, $T=N/L$ is finitely generated and
$$\mathrm{ara}(I+\mathrm{Ann}_{R}L)/\mathrm{Ann}_{R}L\leq \mathrm{ara}(I+\mathrm{Ann}_{R}N)/\mathrm{Ann}_{R}N.$$
Also the short exact sequence
$$0\rightarrow L\rightarrow N\rightarrow T\rightarrow 0$$
yields the exact sequence
$$\cdots\rightarrow \mathrm{Ext}_{R}^{i-1}(L,M)\rightarrow \mathrm{Ext}_{R}^{i}(T,M)\rightarrow \mathrm{Ext}_{R}^{i}(N,M)\rightarrow \mathrm{Ext}_{R}^{i}(L,M)\rightarrow \mathrm{Ext}_{R}^{i+1}(T,M)\rightarrow \cdots.$$
Since $T$ is finitely generated and
$\mathrm{Supp}_{R}T\subseteq \mathrm{Supp}_{R}N\subseteq \mathrm{V}(I)$, $\mathrm{Ext}_{R}^{i}(T,M)$ is artinian by \cite[Lemma 2.2]{S2} for $i\geq 0$. Note that $\mathrm{Cosupp}_{R}\mathrm{Ext}_{R}^{i}(T,M)\subseteq \mathrm{Supp}_{R}T\cap \mathrm{Cosupp}_{R}M\subseteq \mathrm{V}(I)$
by \cite[Corollary 3.6]{N}, thus $\mathrm{Ext}_{R}^{i}(T,M)\in \mathcal{C}^{1}(R,I)_{coa}$ for
$i\geq 0$. If the assertion is true for $L$, then
$\mathrm{Ext}_{R}^{i}(L,M)\in \mathcal{C}^{1}(R,I)_{coa}$ for $i\geq 0$.
Moreover, $\mathcal{C}^{1}(R,I)_{coa}$ is an abelian category by \cite[Corollary 3.10]{S2}. By using the last exact
sequence we can see that the assertion also is true for $N$. Thus replacing $N$ by $L$, without loss of generality, we may assume that $IN=N$. If $N$ is minimax, then the assertion follows from Corollary \ref{cor:2.4}. So we may assume that $N$ is not minimax. As $\mathrm{Hom}_{R}(R/I,N)$ is finitely generated and $\mathrm{Supp}_{R}N\subseteq \mathrm{V}(I)$, it follows from \cite[Lemma 2.1]{M6} that $\mathrm{Supp}_{R}N\nsubseteq \{\mathfrak{m}\}$, and hence $\mathrm{dim}_{R}N=1$. Let
$$\mathrm{Assh}_{R}N=\{\mathfrak{p}_{1},\ldots,\mathfrak{p}_{u}\}.$$
By \cite[Lemma 2.1]{M6}, it can be seen that for $1\leq j\leq u$, $N_{\mathfrak{p}_{j}}$ is an artinian and $IR_{\mathfrak{p}_{j}}$-cofinite $R_{\mathfrak{p}_{j}}$-module with
$N_{\mathfrak{p}_{j}}=(IR_{\mathfrak{p}_{j}})N_{\mathfrak{p}_{j}}$. Thus
$IR_{\mathfrak{p}_{j}}\nsubseteq \bigcup_{\mathfrak{q}R_{\mathfrak{p}_{j}}\in \mathrm{Att}_{R_{\mathfrak{p}_{j}}}N_{\mathfrak{p}_{j}}}\mathfrak{q}R_{\mathfrak{p}_{j}}$.
Let
$$\mathcal{U}_{1}:=\mathop{\bigcup^{u}}_{j=1}\{\mathfrak{q}\in \mathrm{Spec}R\mid \mathfrak{q}R_{\mathfrak{p}_{j}}\in \mathrm{Att}_{R_{\mathfrak{p}_{j}}}N_{\mathfrak{p}_{j}}\}.$$
By Prime Avoidance Theorem,
$$I\nsubseteq \bigcup_{\mathfrak{q}\in \mathcal{U}_{1}}\mathfrak{q}.$$
For each $\mathfrak{q}\in \mathcal{U}_{1}$, one has
$\mathfrak{q}R_{\mathfrak{p}_{j}}\in \mathrm{Att}_{R_{\mathfrak{p}_{j}}}N_{\mathfrak{p}_{j}}$ for
 some $1\leq j\leq u$, so
$$(\mathrm{Ann}_{R}N)R_{\mathfrak{p}_{j}}\subseteq \mathrm{Ann}_{R_{\mathfrak{p}_{j}}}N_{\mathfrak{p}_{j}}\subseteq \mathfrak{q}R_{\mathfrak{p}_{j}},$$
which implies that $\mathrm{Ann}_{R}N\subseteq \mathfrak{q}$. Therefore, $\mathcal{U}_{1}\subseteq \mathrm{V}(\mathrm{Ann}_{R}N)$. Next, let
$$\mathcal{T}:=\mathop{\bigcup^{n+1}}_{i=0}\{\mathfrak{p}\in \mathrm{Cosupp}_{R}\mathrm{Ext}_{R}^{i}(N,M)\mid \mathrm{dim}R/\mathfrak{p}=1\}.$$
As $N\in \mathcal{C}^{1}(R,I)_{cof}$, the set $\mathrm{Ass}_{R}N$ is finite, and hence $\mathcal{T}\subseteq \mathrm{Assh}_{R}N$ is finite. Because $\mathrm{Cosupp}_{R_{\mathfrak{p}}}\mathrm{Hom}_{R}(R_{\mathfrak{p}},\mathrm{Ext}_{R}^{i}(N,M))\subseteq \mathrm{V}(\mathfrak{p}R_{\mathfrak{p}})$ for $\mathfrak{p}\in \mathcal{T}$, $\mathrm{mag}_{R_{\mathfrak{p}}}\mathrm{Hom}_{R}(R_{\mathfrak{p}},\mathrm{Ext}_{R}^{i}(N,M))=0$. Since $\mathrm{Ext}_{R}^{i}(N,M)$ is linearly compact, it follows from \cite[5.5]{M} that $$\mathrm{Ext}_{R}^{i}(N,M)\cong \underleftarrow{\text{lim}}_{M'\in \mathcal{M}}\mathrm{Ext}_{R}^{i}(N,M)/M',$$ where $\mathrm{Ext}_{R}^{i}(N,M)/M'$ is an artinian $R$-module and $\mathcal{M}$ a base consisting of neighborhood of the zero element of $\mathrm{Ext}_{R}^{i}(N,M)$.
Since $\mathrm{Hom}_{R}(R_{\mathfrak{p}},\mathrm{Ext}_{R}^{i}(N,M)/M')$ is a semi-discrete linearly compact $R$-module, there is a short exact sequence
$$0\rightarrow K\rightarrow \mathrm{Hom}_{R}(R_{\mathfrak{p}},\mathrm{Ext}_{R}^{i}(N,M)/M')\rightarrow A\rightarrow 0,$$
where $K$ is finitely generated and $A$ is artinian. Then $\mathrm{Hom}_{R}(R_{\mathfrak{p}},K)=0$, and therefore $\mathrm{Hom}_{R}(R_{\mathfrak{p}},\mathrm{Ext}_{R}^{i}(N,M)/M')\cong \mathrm{Hom}_{R}(R_{\mathfrak{p}},A)$. By \cite[Theorem 3.2]{M0}, $\mathrm{Hom}_{R}(R_{\mathfrak{p}},A)$ is annihilated by $\mathfrak{p}R_{\mathfrak{p}}$, so $\mathrm{Hom}_{R}(R_{\mathfrak{p}},A)$ is artinian by \cite[Theorem 7.30]{S0}. Hence $R_{\mathfrak{p}}/IR_{\mathfrak{p}}\otimes_{R_{\mathfrak{p}}}\mathrm{Hom}_{R}(R_{\mathfrak{p}},\mathrm{Ext}_{R}^{i}(N,M)/M')$
is artinain. Let
$$\mathcal{T}=\{\mathfrak{p}_{1},\cdots,\mathfrak{p}_{s}\}, s\leq u.$$
By \cite[Lemma 3.7]{S2}, we have
$$\mathrm{V}(IR_{\mathfrak{p}_{j}})\cap \mathrm{Ass}_{R_{\mathfrak{p}_{j}}}\mathrm{Hom}_{R}(R_{\mathfrak{p}_{j}},\mathrm{Ext}_{R}^{i}(N,M)/M')\subseteq \mathrm{V}(\mathfrak{p}_{j}R_{\mathfrak{p}_{j}})$$
for $j=1,\cdots,s$. If $\mathrm{V}(IR_{\mathfrak{p}_{j}})=\{\mathfrak{p}_{j}R_{\mathfrak{p}_{j}}\}$, then
$$\mathrm{V}(IR_{\mathfrak{p}_{j}})\cap \mathrm{Ass}_{R_{\mathfrak{p}_{j}}}\mathrm{Hom}_{R}(R_{\mathfrak{p}_{j}},\mathrm{Ext}_{R}^{i}(N,M))\subseteq \mathrm{V}(\mathfrak{p}_{j}R_{\mathfrak{p}_{j}})$$
for $j=1,\cdots,s$. Otherwise, for every $\mathfrak{q}\in \mathrm{Spec}R$ with $IR_{\mathfrak{p}_{j}}\subseteq \mathfrak{q}R_{\mathfrak{p}_{j}}\subsetneq  \mathfrak{p_{j}}R_{\mathfrak{p}_{j}}$, we have
$$\mathrm{Hom}_{R_{\mathfrak{p}_{j}}}(R_{\mathfrak{p}_{j}}/\mathfrak{q}R_{\mathfrak{p}_{j}},\mathrm{Ext}_{R}^{i}(N,M)/M')=0.$$
Therefore,
\begin{center}$\begin{aligned}
\mathrm{Hom}_{R_{\mathfrak{p}_{j}}}(R_{\mathfrak{p}_{j}}/\mathfrak{q}R_{\mathfrak{p}_{j}},\mathrm{Ext}_{R}^{i}(N,M))
&= \mathrm{Hom}_{R_{\mathfrak{p}_{j}}}(R_{\mathfrak{p}_{j}}/\mathfrak{q}R_{\mathfrak{p}_{j}},\underleftarrow{\text{lim}}_{M'\in \mathcal{M}}\mathrm{Ext}_{R}^{i}(N,M)/M') \\
&=
\underleftarrow{\text{lim}}_{M'\in \mathcal{M}}\mathrm{Hom}_{R_{\mathfrak{p}_{j}}}(R_{\mathfrak{p}_{j}}/\mathfrak{q}R_{\mathfrak{p}_{j}},\mathrm{Ext}_{R}^{i}(N,M)/M')=0,
\end{aligned}$\end{center}
which implies that
$$\mathrm{V}(IR_{\mathfrak{p}_{j}})\cap \mathrm{Ass}_{R_{\mathfrak{p}_{j}}}\mathrm{Hom}_{R}(R_{\mathfrak{p}_{j}},\mathrm{Ext}_{R}^{i}(N,M))\subseteq \mathrm{V}(\mathfrak{p}_{j}R_{\mathfrak{p}_{j}})$$
for $j=1,\cdots,s$.
Next, let
$$\mathcal{U}_{2}:=\mathop{\bigcup^{n+1}}_{i=0}\mathop{\bigcup^{s}}_{j=1}\{\mathfrak{q}\in \mathrm{Spec}R\mid \mathfrak{q}R_{\mathfrak{p}_{j}}\in \mathrm{Ass}_{R_{\mathfrak{p}_{j}}}\mathrm{Hom}_{R}(R_{\mathfrak{p}_{j}},\mathrm{Ext}_{R}^{i}(N,M))\}.$$
It is easy to see that $\mathcal{U}_{2}\cap \mathrm{V}(I)\subseteq \mathcal{T}$. Also for each $\mathfrak{q}\in \mathcal{U}_{2}$, we have
$$\mathfrak{q}R_{\mathfrak{p}_{j}}\in \mathrm{Ass}_{R_{\mathfrak{p}_{j}}}\mathrm{Hom}_{R}(R_{\mathfrak{p}_{j}},\mathrm{Ext}_{R}^{i}(N,M))$$
for some $0\leq i\leq n+1$ and $1\leq j\leq s$. Hence
$$(\mathrm{Ann}_{R}N)R_{\mathfrak{p}_{j}}\subseteq \mathrm{Ann}_{R}(\mathrm{Ext}_{R}^{i}(N,M))R_{\mathfrak{p}_{j}}\subseteq \mathrm{Ann}_{R_{\mathfrak{p}_{j}}}\mathrm{Hom}_{R}(R_{\mathfrak{p}_{j}},\mathrm{Ext}_{R}^{i}(N,M))\subseteq \mathfrak{q}R_{\mathfrak{p}_{j}},$$
and so $\mathrm{Ann}_{R}N\subseteq \mathfrak{q}$. Therefore,
$\mathcal{U}_{2}\subseteq \mathrm{V}(\mathrm{Ann}_{R}N)$. Set
$\mathcal{U}= \mathcal{U}_{1}\cup \mathcal{U}_{2}$. Since
$t=\mathrm{ara}_{N}(I)\geq 1$, there exists elements $y_{1},\cdots,y_{t}\in I$ such that
$$\mathrm{Rad}(I+\mathrm{Ann}_{R}N/\mathrm{Ann}_{R}N)=\mathrm{Rad}((y_{1},\cdots,y_{t})+\mathrm{Ann}_{R}N/\mathrm{Ann}_{R}N).$$
By Prime Avoidance Theorem,
$$I\nsubseteq \bigcup_{\mathfrak{q}\in \mathcal{U}\backslash \mathrm{V}(I)}\mathfrak{q},$$
which implies that
$$(y_{1},\cdots,y_{t})+\mathrm{Ann}_{R}N \nsubseteq \bigcup_{\mathfrak{q}\in \mathcal{U}\backslash \mathrm{V}(I)}\mathfrak{q}.$$
Note that
$\mathrm{Ann}_{R}N\subseteq \mathop{\bigcap}_{\mathfrak{q}\in \mathcal{U}\backslash \mathrm{V}(I)}\mathfrak{q}$,
so
$$(y_{1},\cdots,y_{t})\nsubseteq \bigcup_{\mathfrak{q}\in \mathcal{U}\backslash \mathrm{V}(I)}\mathfrak{q}.$$
By \cite[Theorem 16.8]{M5} there is $a\in (y_{2},\cdots,y_{t})$ such that
$y_{1}+a\notin \bigcup_{\mathfrak{q}\in \mathcal{U}\backslash \mathrm{V}(I)}\mathfrak{q}$. Let $x=y_{1}+a$. Then $x\in I$ and
$$\mathrm{Rad}(I+\mathrm{Ann}_{R}N/\mathrm{Ann}_{R}N)=\mathrm{Rad}((x,y_{2},\cdots,y_{t})+\mathrm{Ann}_{R}N/\mathrm{Ann}_{R}N).$$
Next, let $U=(0:_{N}x)$, $V=N/xN$. Then
$$\mathrm{ara}_{U}(I)=\mathrm{ara}(I+\mathrm{Ann}_{R}U)/\mathrm{Ann}_{R}U\leq t-1.$$
As $N\in \mathcal{C}^{1}(R,I)_{cof}$ and $\mathcal{C}^{1}(R,I)_{cof}$ is an abelian category, it follows that $U$, $V\in \mathcal{C}^{1}(R,I)_{cof}$. By the inductive hypothesis, $\mathrm{Ext}_{R}^{i}(U,M)\in \mathcal{C}^{1}(R,I)_{coa}$ for $0\leq i \leq n$. Also $V_{\mathfrak{p}_{j}}=0$ for $1\leq j\leq u$, so
$\mathrm{Supp}_{R}V\subseteq \mathrm{Supp}_{R}N\backslash \{\mathfrak{p}_{1},\ldots,\mathfrak{p}_{u}\}\subseteq \{\mathfrak{m}\}$, and hence $V$ is artinian $I$-cofinite by \cite[Lemma 2.1]{M6}. By the hypothesis and \cite[Lemma 2.2]{S2}, one sees that $\mathrm{Ext}_{R}^{i}(R/\mathfrak{m},M)$ is artinain for $0\leq i\leq n+1$, it follows from Lemma \ref{lem:2.3} that the $R$-module $\mathrm{Ext}_{R}^{i}(V,M)\in \mathcal{S}$ for $0\leq i\leq n+1$. Also the short exact sequences
$$\xymatrix{0\ar[r] & U\ar[r] & N\ar[r]^{f} & xN\ar[r] & 0},$$
$$\xymatrix{0\ar[r] & xN\ar[r]^{g} & N\ar[r] & V\ar[r] & 0}$$
induce the exact sequences
$$\xymatrix{\mathrm{Ext}_{R}^{i-1}(U,M)\ar[r] & \mathrm{Ext}_{R}^{i}(xN,M)\ar[r]^{\mathrm{Ext}_{R}^{i}(f,M)} & \mathrm{Ext}_{R}^{i}(N,M)\ar[r] & \mathrm{Ext}_{R}^{i}(U,M),} \eqno(2.5.1)$$
$$\xymatrix{\mathrm{Ext}_{R}^{i}(V,M)\ar[r] & \mathrm{Ext}_{R}^{i}(N,M)\ar[r]^{\mathrm{Ext}_{R}^{i}(g,M)} & \mathrm{Ext}_{R}^{i}(xN,M)\ar[r] & \mathrm{Ext}_{R}^{i+1}(V,M).}\eqno(2.5.2)$$
From $(2.5.2)$ and Lemma \ref{lem:2.2} we can deduce
that $\mathrm{ker}\mathrm{Ext}_{R}^{i}(g,M)$ and $\mathrm{coker}\mathrm{Ext}_{R}^{i}(g,M)$ are in $\mathcal{S}$ for $0\leq i\leq n$. From $(2.5.1)$ one has
$R/I \otimes_{R} \mathrm{ker}\mathrm{Ext}_{R}^{i}(f,M)$ is artinian for $0\leq i\leq n+1$.  Hence the exact sequence
$$\xymatrix{0\ar[r] & \mathrm{kerExt}_{R}^{i}(g,M)\ar[r] & \mathrm{kerExt}_{R}^{i}(gf,M)\ar[r]^-{\delta^{i}} & \mathrm{kerExt}_{R}^{i}(f,M)\ar[r] & W^{i}\ar[r] & 0,}$$
and the monmorphism $W^{i}\rightarrow \mathrm{cokerExt}_{R}^{i}(g,M)$ imply that $W^{i}\in \mathcal{S}$ for $0\leq i \leq n$. Since $\mathrm{Ext}_{R}^{i}(gf,M)=\mathrm{Ext}_{R}^{i}(x,M)$, we have $\mathrm{kerExt}_{R}^{i}(gf,M)\cong (0:_{\mathrm{Ext}_{R}^{i}(N,M)}x)$. At this point, we show that $Y^{i}:=(0:_{\mathrm{Ext}_{R}^{i}(N,M)}x)$ is minimax for $0\leq i \leq n$. Since
\begin{center}$\begin{aligned}
\mathrm{V}(xR_{\mathfrak{p}_{j}})\cap \mathrm{Ass}_{R_{\mathfrak{p}_{j}}}\mathrm{Hom}_{R}(R_{\mathfrak{p}_{j}},Y^{i})
&\subseteq \mathrm{V}(IR_{\mathfrak{p}_{j}})\cap \mathrm{Ass}_{R_{\mathfrak{p}_{j}}}\mathrm{Hom}_{R}(R_{\mathfrak{p}_{j}},Y^{i}) \\
&\subseteq \mathrm{V}(IR_{\mathfrak{p}_{j}})\cap \mathrm{Ass}_{R_{\mathfrak{p}_{j}}}\mathrm{Hom}_{R}(R_{\mathfrak{p}_{j}},\mathrm{Ext}_{R}^{i}(N,M)) \\
&\subseteq \mathrm{V}(\mathfrak{p}_{j}R_{\mathfrak{p}_{j}}),
\end{aligned}$\end{center}it follows from Lemma \ref{lem:2.6} that
 the $R_{\mathfrak{p}_{j}}$-module
\begin{center}$
\mathrm{Hom}_{R_{\mathfrak{p}_{j}}}(R_{\mathfrak{p}_{j}}/xR_{\mathfrak{p}_{j}},\mathrm{Hom}_{R}(R_{\mathfrak{p}_{j}},Y^{i}))
\cong \mathrm{Hom}_{R}(R_{\mathfrak{p}_{j}}\otimes_{R}R/xR,Y^{i})
\cong \mathrm{Hom}_{R}(R_{\mathfrak{p}_{j}},Y^{i})$\end{center}
is of finite length for $1\leq j\leq s$ and $0\leq i\leq n$, so there exists an artinian quotient module $\widetilde{Y}^{i,j}$ of $Y^{i}$ such that $\mathrm{Hom}_{R}(R_{\mathfrak{p}_{j}},\widetilde{Y}^{i,j})=\mathrm{Hom}_{R}(R_{\mathfrak{p}_{j}},Y^{i})$. Set $\widetilde{Y}^{i}=\widetilde{Y}^{i,1}+\cdots+\widetilde{Y}^{i,s}$ for $0\leq i\leq n$. Then $\widetilde{Y}^{i}$ is an artinian quotient module of $Y^{i}$ so that $\mathrm{Hom}_{R}(R_{\mathfrak{p}_{j}},\widetilde{Y}^{i})=\mathrm{Hom}_{R}(R_{\mathfrak{p}_{j}},Y^{i})$ for $1\leq j\leq s$. Thus the exact sequence
$$0\rightarrow K^{i}\rightarrow Y^{i}\rightarrow \widetilde{Y}^{i}\rightarrow 0\eqno(2.5.3)$$
 implies that $\mathrm{Hom}_{R}(R_{\mathfrak{p}_{j}},K^{i})=0$, and hence $\mathrm{Supp}_{R}D(K^{i})=\mathrm{Cosupp}_{R}K^{i}\subseteq \{\mathfrak{m}\}$. Also the exact sequences
$$0\rightarrow \mathrm{im}\delta^{i}\rightarrow \mathrm{kerExt}_{R}^{i}(f,M)\rightarrow W^{i}\rightarrow 0,$$
$$0\rightarrow \mathrm{kerExt}_{R}^{i}(g,M)\rightarrow \mathrm{kerExt}_{R}^{i}(gf,M)\rightarrow \mathrm{im}\delta^{i}\rightarrow 0$$
induce the exact sequences
$$\mathrm{Tor}_{i}^{R}(R/I,W^{i})\rightarrow R/I\otimes_{R}\mathrm{im}\delta^{i}\rightarrow R/I\otimes_{R}\mathrm{kerExt}_{R}^{i}(f,M),$$
$$R/I\otimes_{R}\mathrm{kerExt}_{R}^{i}(g,M)\rightarrow R/I\otimes_{R}\mathrm{kerExt}_{R}^{i}(gf,M)\rightarrow R/I\otimes_{R}\mathrm{im}\delta^{i}\rightarrow 0,$$
which implies that
$R/I\otimes_{R}\mathrm{kerExt}_{R}^{i}(gf,M)=Y^{i}/IY^{i}$
is artinian for $0\leq i \leq n$. The induced exact sequence
$$\mathrm{Tor}_{1}^{R}(R/I, \widetilde{Y}^{i})\rightarrow K^{i}/IK^{i}\rightarrow Y^{i}/IY^{i}$$
shows that $\mathrm{Hom}_{R}(R/I,D(K^{i}))\cong D(K^{i}/IK^{i})$ is finitely generated. Hence $D(K^{i})$ is artinian by \cite[Lemma 2.1]{M6}, and so $K^{i}$ is finitely generated. Therefore, the $R$-module $Y^{i}$ is minimax for $0\leq i\leq n$. Furthermore, $Y^{i}$ is $I$-coartinian for $0\leq i\leq n$ by \cite[Lemma 3.5]{N1}. At the next step we show that $\mathrm{kerExt}_{R}^{i}(f,M)$ is minimax $I$-coartinian for $0 \leq i \leq n+1$.
Since
\begin{center}$\begin{aligned}
\mathrm{Supp}_{R}D(\mathrm{kerExt}_{R}^{i}(f,M))
&= \mathrm{Cosupp}_{R}\mathrm{kerExt}_{R}^{i}(f,M)\\
&= \mathrm{Cosupp}_{R}\mathrm{im}\delta^{i}\cup \mathrm{Cosupp}_{R}W^{i} \\
&\subseteq
\mathrm{Cosupp}_{R}\mathrm{kerExt}_{R}^{i}(gf,M)\cup (\mathrm{V}(I)\cap \{\mathfrak{m}\}) \\
&\subseteq
\mathrm{V}(I)\cup (\mathrm{V}(I)\cap \{\mathfrak{m}\})
=
\mathrm{V}(I)\cap \{\mathfrak{m}\},
\end{aligned}$\end{center}
and $\mathrm{Hom}_{R}(R/I,D(\mathrm{kerExt}_{R}^{i}(f,M)))\cong D(R/I \otimes_{R} \mathrm{kerExt}_{R}^{i}(f,M))$ is finitely generated, we have the $R$-module $D(\mathrm{kerExt}_{R}^{i}(f,M))$ is artinian by \cite[Lemma 2.1]{M6}.
This means that $\mathrm{kerExt}_{R}^{i}(f,M)$ is minimax, and hence $\mathrm{kerExt}_{R}^{i}(f,M)$ is $I$-coartinian  for $0\leq i\leq n+1$ by \cite[Lemma 3.5]{N1}. Furthermore, from the exact sequence
$$0\rightarrow \mathrm{cokerExt}_{R}^{i}(f,M)\rightarrow \mathrm{Ext}_{R}^{i}(U,M)\rightarrow \mathrm{kerExt}_{R}^{i+1}(f,M)\rightarrow 0,$$
one has $\mathrm{cokerExt}_{R}^{i}(f,M)$ is $I$-coartinian for $0\leq i\leq n$. Consequently, the exact sequence
$$\mathrm{cokerExt}_{R}^{i}(g,M)\rightarrow \mathrm{cokerExt}_{R}^{i}(gf,M)\rightarrow \mathrm{cokerExt}_{R}^{i}(f,M)\rightarrow 0$$
induces that $\mathrm{cokerExt}_{R}^{i}(gf,M)=\mathrm{Ext}_{R}^{i}(N,M)/x\mathrm{Ext}_{R}^{i}(N,M)$ is $I$-coartinian for $0\leq i\leq n$. Hence \cite[Proposition 2.6]{S2} implies that the $R$-module $\mathrm{Ext}_{R}^{i}(N,M)$ is $I$-coartinian for $0\leq i\leq n$. This completes the inductive step.
\end{proof}

By Theorem \ref{thm:A}, \cite[Corollary 2.7]{BN} and \cite[Theorem 3.6]{S2}, we have the following immediate corollary.

\begin{cor}\label{cor:2.8}
Let $N$ be a finitely generated $R$-module with $\mathrm{dim}_{R}N/IN\leq 1$ and $M$ a semi-discrete linearly compact $R$-module with $\mathrm{mag}_{R}M\leq 1$. Then $\mathrm{Ext}_{R}^{i}(\mathrm{H}_{I}^{j}(N),\mathrm{H}^{I}_{k}(M))\in \mathcal{C}^{1}(R,I)_{coa}$ for all $i,j,k\geq 0$.
\end{cor}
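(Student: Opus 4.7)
The plan is to verify that Theorem~\ref{thm:A} applies directly once we substitute $\mathrm{H}_{I}^{j}(N)$ for the first argument and $\mathrm{H}^{I}_{k}(M)$ for the second. Specifically, I would show that $\mathrm{H}_{I}^{j}(N)$ belongs to $\mathcal{C}^{1}(R,I)_{cof}$ and that $\mathrm{H}^{I}_{k}(M)$ is a linearly compact $R$-module belonging to $\mathcal{C}(R,I)_{coa}$. Once both membership conditions are established, Theorem~\ref{thm:A} will yield the conclusion immediately.

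First I would invoke \cite[Corollary 2.7]{BN} to handle the cohomology side. For a finitely generated $R$-module $N$ with $\mathrm{dim}_{R}N/IN\leq 1$, that result guarantees that each $\mathrm{H}_{I}^{j}(N)$ is $I$-cofinite and, by a standard estimate, has dimension at most $1$ (since $\mathrm{Supp}_{R}\mathrm{H}_{I}^{j}(N)\subseteq\mathrm{V}(I)\cap\mathrm{Supp}_{R}N$ and dimensions of local cohomology modules of $N$ are controlled by $\dim N/IN$). Thus $\mathrm{H}_{I}^{j}(N)\in\mathcal{C}^{1}(R,I)_{cof}$ for every $j\geq 0$.

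Next I would dualize the above on the homology side using \cite[Theorem 3.6]{S2}. That theorem should assert that, for a semi-discrete linearly compact $R$-module $M$ with $\mathrm{mag}_{R}M\leq 1$, every local homology module $\mathrm{H}^{I}_{k}(M)$ is again semi-discrete linearly compact, is $I$-coartinian, and has magnitude at most $1$. In particular $\mathrm{H}^{I}_{k}(M)$ is a linearly compact $R$-module in $\mathcal{C}(R,I)_{coa}$.

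With these two ingredients in hand, the hypotheses of Theorem~\ref{thm:A} are satisfied with $N$ replaced by $\mathrm{H}_{I}^{j}(N)$ and $M$ replaced by $\mathrm{H}^{I}_{k}(M)$, so the theorem delivers $\mathrm{Ext}_{R}^{i}(\mathrm{H}_{I}^{j}(N),\mathrm{H}^{I}_{k}(M))\in\mathcal{C}^{1}(R,I)_{coa}$ for all $i,j,k\geq 0$. The only potential obstacle is confirming that the cited results really do provide the precise package of properties (cofiniteness together with the dimension bound on the cohomology side; linear compactness together with coartinianness and the magnitude bound on the homology side); apart from locating the exact form of those citations, the proof is a one-line application of Theorem~\ref{thm:A}.
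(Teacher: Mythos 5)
Your proposal is correct and follows exactly the route the paper takes: the paper states the corollary is immediate from Theorem~\ref{thm:A} together with \cite[Corollary 2.7]{BN} (placing $\mathrm{H}_{I}^{j}(N)$ in $\mathcal{C}^{1}(R,I)_{cof}$) and \cite[Theorem 3.6]{S2} (placing $\mathrm{H}^{I}_{k}(M)$ among linearly compact modules in $\mathcal{C}(R,I)_{coa}$). Your unpacking of those two citations and the dimension estimate $\mathrm{Supp}_{R}\mathrm{H}_{I}^{j}(N)\subseteq\mathrm{V}(I)\cap\mathrm{Supp}_{R}N=\mathrm{Supp}_{R}(N/IN)$ is precisely what the paper leaves implicit.
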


We next study the coartinianess of the $R$-module $\mathrm{Tor}_{i}^{R}(N,M)$.

\begin{lem}\label{lem:2.0}
Let $M$ be an $I$-coartinian $R$-module. Then for any finite length $R$-module $N$, the $R$-module
$\mathrm{Tor}_{i}^{R}(N,M)$ has finite length for $i\geq 0$.
\end{lem}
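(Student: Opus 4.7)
The plan is to induct on $\mathrm{length}_R(N)$. For the inductive step, choose a submodule $N' \subsetneq N$ with $N/N' \cong R/\mathfrak{m}$, so $\mathrm{length}_R(N') < \mathrm{length}_R(N)$, and use the long exact Tor sequence
\[
\cdots \to \mathrm{Tor}_i^R(N', M) \to \mathrm{Tor}_i^R(N, M) \to \mathrm{Tor}_i^R(R/\mathfrak{m}, M) \to \mathrm{Tor}_{i-1}^R(N', M) \to \cdots.
\]
Since finite-length $R$-modules form a Serre subcategory of $\mathrm{Mod}(R)$, everything reduces to the base case $N = R/\mathfrak{m}$.

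For the base case, observe first that $\mathrm{Tor}_i^R(R/\mathfrak{m}, M)$ is annihilated by $\mathfrak{m}$ and is therefore an $R/\mathfrak{m}$-vector space; hence it suffices to prove it is artinian (equivalently, finite-dimensional over $R/\mathfrak{m}$). Since $I\subseteq\mathfrak{m}$, the residue field $R/\mathfrak{m}$ is naturally an $R/I$-module, so the Cartan--Eilenberg change-of-rings spectral sequence applies:
\[
E^2_{p,q} = \mathrm{Tor}_p^{R/I}\bigl(R/\mathfrak{m},\, \mathrm{Tor}_q^R(R/I, M)\bigr) \Longrightarrow \mathrm{Tor}_{p+q}^R(R/\mathfrak{m}, M).
\]
By the $I$-coartinianness of $M$, each $\mathrm{Tor}_q^R(R/I, M)$ is artinian; being killed by $I$, it is an artinian $R/I$-module. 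Fixing a resolution of $R/\mathfrak{m}$ by finitely generated free $R/I$-modules, each $E^2_{p,q}$ is a subquotient of a finite direct sum of copies of $\mathrm{Tor}_q^R(R/I, M)$, hence artinian. Being additionally annihilated by $\mathfrak{m}$, it is a finite-dimensional $R/\mathfrak{m}$-vector space --- in particular, of finite length.

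At each total degree $n$, only finitely many graded pieces $E^\infty_{p,n-p}$ (for $0\leq p\leq n$) contribute to the filtration on $\mathrm{Tor}_n^R(R/\mathfrak{m}, M)$, and each is a subquotient of the finite-length module $E^2_{p,n-p}$. Therefore $\mathrm{Tor}_n^R(R/\mathfrak{m}, M)$ is a finite iterated extension of finite-length modules, and so has finite length, completing the base case. I anticipate no serious obstacle; the one technical point is recognizing that the $E^2$-terms are of finite length on the nose, which combines the artinianness supplied by the $I$-coartinianness of $M$ with the $\mathfrak{m}$-annihilation inherent in $R/\mathfrak{m}$.
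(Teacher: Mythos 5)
Your proof is correct, but it takes a genuinely different route from the paper's. The paper's proof is essentially a two-citation one-liner: it invokes a cosupport formula $\mathrm{Cosupp}_R\mathrm{Tor}_i^R(N,M)\subseteq\mathrm{Supp}_R N\cap\mathrm{Cosupp}_R M\subseteq\{\mathfrak{m}\}$ from Nam's paper together with an artinianness theorem for Tor from Faridian's thesis, and then combines them (an artinian module whose cosupport lies in $\{\mathfrak{m}\}$ has finite length, via the Matlis dual). You instead reduce to $N=R/\mathfrak{m}$ by induction on length and the Serre subcategory property, and then handle the base case by the change-of-rings spectral sequence $E^2_{p,q}=\mathrm{Tor}_p^{R/I}(R/\mathfrak{m},\mathrm{Tor}_q^R(R/I,M))\Rightarrow\mathrm{Tor}_{p+q}^R(R/\mathfrak{m},M)$, using only the raw definition of $I$-coartinian (artinianness of $\mathrm{Tor}_q^R(R/I,M)$) plus the $\mathfrak{m}$-annihilation of the $E^2$-terms. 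Your argument is longer but more self-contained: it avoids relying on the external artinianness theorem and on the cosupport calculus entirely, deriving everything from the definition and standard homological algebra. The paper's version buys brevity by leaning on the cited machinery; yours buys transparency and independence from those sources. One minor stylistic note: you don't actually need to pass through ``artinian $\Rightarrow$ finite-dimensional'' for the whole module $\mathrm{Tor}_i^R(R/\mathfrak{m},M)$ at the outset, since your spectral sequence argument already produces finite length directly from the filtration; the opening reduction to artinianness is harmless but redundant.
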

\begin{proof}
The assertion is clear since $\mathrm{Cosupp}_{R}\mathrm{Tor}_{i}^{R}(N,M)\subseteq \mathrm{Supp}_{R}N\cap \mathrm{Cosupp}_{R}M\subseteq \{\mathfrak{m}\}$ and $\mathrm{Tor}_{i}^{R}(N,M)$ is artinian by \cite[Corollary 3.6]{N} and \cite[Theorem 2.2.10]{F}.
\end{proof}

\begin{lem}\label{lem:2.8}
Let $M$ be a semi-discrete linearly compact $I$-coartinian $R$-module and $N$ a finitely generated $R$-module with $\mathrm{dim}_{R}N=1$. Then $\mathrm{Tor}_{i}^{R}(N,M)\in \mathcal{S'}$ for $i\geq 0$.
\end{lem}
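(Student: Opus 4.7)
The goal is to show $T_i:=\mathrm{Tor}_i^R(N,M)\in \mathcal{S}'$ for every $i\geq 0$, i.e.\ that $T_i$ is minimax and $I$-coartinian. The cosupport inclusion $\mathrm{Cosupp}_R T_i\subseteq \mathrm{Supp}_R N\cap \mathrm{Cosupp}_R M\subseteq \mathrm{V}(I)$ is immediate from \cite[Corollary 3.6]{N} and the $I$-coartinianess of $M$. As in the proof of Theorem~\ref{thm:A}, a faithfully flat base change using $\mathrm{Tor}_i^{\widehat{R}}(N\otimes_R\widehat{R},M)\cong \mathrm{Tor}_i^R(N,M)$ lets us assume $R$ is $\mathfrak{m}$-adically complete.

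First I would dispatch the $\mathfrak{m}$-torsion of $N$. The submodule $\Gamma_{\mathfrak{m}}(N)$ is finitely generated and $\mathfrak{m}$-torsion, hence of finite length, so by Lemma~\ref{lem:2.0} every $\mathrm{Tor}_i^R(\Gamma_{\mathfrak{m}}(N),M)$ has finite length and therefore lies in $\mathcal{S}'$. Because $\mathcal{S}'$ is a Serre subcategory (Lemma~\ref{lem:2.2}), the long exact Tor sequence for $0\to \Gamma_{\mathfrak{m}}(N)\to N\to N/\Gamma_{\mathfrak{m}}(N)\to 0$ reduces us to $N/\Gamma_{\mathfrak{m}}(N)$. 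Now with $N$ $\mathfrak{m}$-torsion free and $\dim_R N\leq 1$, every $\mathfrak{p}\in \mathrm{Ass}_R N$ satisfies $\dim R/\mathfrak{p}=1$, and Prime Avoidance supplies $x\in \mathfrak{m}$ that is a non-zero-divisor on $N$. Then $N/xN$ is finitely generated of dimension zero, hence of finite length, and Lemma~\ref{lem:2.0} again delivers that $\mathrm{Tor}_i^R(N/xN,M)$ has finite length for every $i$.

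Extracting from the long exact Tor sequence of $0\to N\xrightarrow{x} N\to N/xN\to 0$ the stretch
\begin{equation*}
\mathrm{Tor}_{i+1}^R(N/xN,M)\to T_i\xrightarrow{x} T_i\to \mathrm{Tor}_i^R(N/xN,M)
\end{equation*}
shows that $(0:_{T_i} x)$ (a quotient of the left term) and $T_i/xT_i$ (a submodule of the right term) are both of finite length. The module $T_i$ is linearly compact by the usual Tor-version of \cite[Lemma 2.5]{CN1}. Passing to the Matlis dual $D(T_i)\cong \mathrm{Ext}_R^i(N,D(M))$ transposes these into $(0:_{D(T_i)}x)\cong D(T_i/xT_i)$ and $D(T_i)/xD(T_i)\cong D((0:_{T_i}x))$ being of finite length; combined with the structural control on $D(M)$ afforded by the semi-discrete linearly compactness of $M$ and with the cosupport bound $\mathrm{Cosupp}_R T_i\subseteq \mathrm{V}(I)\cap \mathrm{Supp}_R N$, a structure argument patterned on Lemma~\ref{lem:2.3} and the proof of Theorem~\ref{thm:A} shows that $D(T_i)$ is minimax. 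Matlis reflexivity for minimax modules over a complete local ring then transfers minimaxness back to $T_i$, and with $T_i$ minimax together with $\mathrm{Cosupp}_R T_i\subseteq \mathrm{V}(I)$, \cite[Lemma 3.5]{N1} upgrades this to $I$-coartinianess, placing $T_i$ in $\mathcal{S}'$.

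The main obstacle is precisely this last step: converting the finite-length control on $(0:_{T_i}x)$ and $T_i/xT_i$, together with the cosupport restriction, into the full minimax property. The duality direction is formal, but producing a finitely-generated-plus-artinian presentation of $D(T_i)$ from the two finite-length quotients requires exploiting both the semi-discrete linearly compact hypothesis on $M$ and the dimension-one hypothesis on $N$, in the style of the more elaborate auxiliary computations used to finish the proof of Theorem~\ref{thm:A}.
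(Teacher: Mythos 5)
Your proposal diverges from the paper in two essential ways, both of which create real problems.

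\textbf{Minimaxness.} The paper handles this in one line: since $M$ is semi-discrete linearly compact, computing $\mathrm{Tor}_i^R(N,M)$ via a resolution of $N$ by finitely generated free modules shows $\mathrm{Tor}_i^R(N,M)$ is itself semi-discrete linearly compact (finite direct sums, submodules, and quotients preserve this), and Z\"oschinger's theorem \cite{Z} says semi-discrete linearly compact modules over a noetherian ring are minimax. Your argument, by contrast, tries to extract minimaxness of $D(T_i)$ from finite-length control on $(0:_{T_i}x)$ and $T_i/xT_i$ together with a cosupport bound, and you yourself flag that this step is not completed (``The main obstacle is precisely this last step\dots''). That step is indeed not formal: finite length of a single $x$-torsion/cotorsion pair plus $\mathrm{Cosupp}\subseteq\mathrm{V}(I)$ does not by itself yield a finitely-generated-plus-artinian presentation. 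So as written your proof has a genuine gap that the paper avoids entirely by invoking \cite{Z}.

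\textbf{Choice of $\Gamma_{\mathfrak{m}}$ versus $\Gamma_I$, and $x\in\mathfrak{m}$ versus $x\in I$.} You reduce modulo $\Gamma_{\mathfrak{m}}(N)$ and then pick $x\in\mathfrak{m}$ regular on $N$. The paper instead reduces modulo $\Gamma_I(N)$ (still finitely generated, so $\mathrm{Tor}_i^R(\Gamma_I(N),M)$ is artinian by \cite[Theorem 2.2.10]{F}) and then, because $\Gamma_I(N)=0$ forces $I\nsubseteq\bigcup_{\mathfrak{p}\in\mathrm{Ass}_RN}\mathfrak{p}$, chooses $x\in I$ regular on $N$. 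This distinction matters: the target of the whole argument is to bound $T_i/IT_i$, not $T_i/xT_i$ for an arbitrary $x\in\mathfrak{m}$. With $x\in I$, the injection $T_i/xT_i\hookrightarrow\mathrm{Tor}_i^R(N/xN,M)$ and Lemma~\ref{lem:2.0} give that $T_i/xT_i$, hence its quotient $T_i/IT_i$, has finite length, and then minimaxness plus the cosupport bound plus \cite[Lemma 3.5]{N1} delivers $I$-coartinianess directly. With your $x\in\mathfrak{m}\setminus I$ the finite length of $T_i/xT_i$ tells you nothing about $T_i/IT_i$, so even if the duality step could be made to work you would still need an extra argument to pass from $x$-control to $I$-control. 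Replacing $\mathfrak{m}$ by $I$ throughout and invoking \cite{Z} for minimaxness closes both gaps and recovers the paper's proof.
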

\begin{proof}
Let $P_\bullet\rightarrow N$ be a free resolution by finitely generated free $R$-modules. As submodules and homomorphic images of a semi-discrete linearly compact module are also semi-discrete linearly compact, one has
$\mathrm{Tor}^R_i(N,M)=\mathrm{H}_i(P_\bullet\otimes_RM)$ is semi-discrete linearly compact, and so $\mathrm{Tor}_{i}^{R}(N,M)$ is minimax for $i\geq 0$ by \cite{Z}. We only need to show that $\mathrm{Tor}_{i}^{R}(N,M)$ is $I$-coartinian for $i\geq 0$. Consider the exact sequence
$$0\rightarrow \Gamma_{I}(N)\rightarrow N\rightarrow N/\Gamma_{I}(N)\rightarrow 0,$$
which induces the exact sequence of linearly compact $R$-modules
$$\cdots\rightarrow \mathrm{Tor}_{i}^{R}(\Gamma_{I}(N),M)\rightarrow \mathrm{Tor}_{i}^{R}(N,M)\rightarrow \mathrm{Tor}_{i}^{R}(N/\Gamma_{I}(N),M)\rightarrow \cdots.$$
Since $\Gamma_{I}(N)$ is finitely generated,  $\mathrm{Tor}_{i}^{R}(\Gamma_{I}(N),M)$ is artinian for $i\geq 0$ by \cite[Theorem 2.2.10]{F}. Therefore, $\mathrm{Tor}_{i}^{R}(\Gamma_{I}(N),M)$ is minimax $I$-coartinian for $i\geq 0$. If  $\mathrm{Tor}_{i}^{R}(N/\Gamma_{I}(N),M)$ is also minimax $I$-coartinian for $i\geq 0$, then the assertion is true for $N$. Consequently, without loss of generality, we may suppose that $\Gamma_{I}(N)=0$. Then $I\nsubseteq \bigcup_{\mathfrak{p}\in \mathrm{Ass}_{R}N}\mathfrak{p}$, and there exists an element $x\in I$ is a non-zero divisor of $N$. Now, the exact sequence
$$\xymatrix{0\ar[r] & N\ar[r]^{x} & N\ar[r] & N/xN\ar[r] & 0}$$
induces the exact sequence of linearly compact $R$-modules
$$0\rightarrow \mathrm{Tor}_{i}^{R}(N,M)/x\mathrm{Tor}_{i}^{R}(N,M)\rightarrow \mathrm{Tor}_{i}^{R}(N/xN,M).$$
Since $N/xN$ is finitely generated and $\mathrm{dim}_{R}N/xN=0$, it has finite length. Hence Lemma \ref{lem:2.0} implies that $\mathrm{Tor}_{i}^{R}(N/xN,M)$ has finite length for $i\geq 0$, and hence the $R$-module $\mathrm{Tor}_{i}^{R}(N,M)/I\mathrm{Tor}_{i}^{R}(N,M)$ is of finite length for $i\geq 0$. Consequently, $\mathrm{Tor}_{i}^{R}(N,M)$ is $I$-coartinian for $i\geq 0$  by \cite[Lemma 3.5]{N1}.
\end{proof}

\begin{proof}[Proof of Theorem \ref{thm:B}]
As in the proof of Lemma \ref{lem:2.8}, we may assume that $\Gamma_{I}(N)=0$ and $\mathrm{dim}_{R}N=2$. Then $I\nsubseteq \bigcup_{\mathfrak{p}\in \mathrm{Ass}_{R}N}\mathfrak{p}$, and there exists an element $x\in I$ such that $x\notin \bigcup_{\mathfrak{p}\in \mathrm{Ass}_{R}N}\mathfrak{p}$. Then the exact sequence
$$\xymatrix{0\ar[r] & N\ar[r]^{x} & N\ar[r] & N/xN\ar[r] & 0}$$
induces the exact sequence of linearly compact $R$-modules
$$\xymatrix{\mathrm{Tor}_{i+1}^{R}(N/xN,M)\ar[r] & \mathrm{Tor}_{i}^{R}(N,M)\ar[r]^{x} & \mathrm{Tor}_{i}^{R}(N,M)\ar[r] & \mathrm{Tor}_{i}^{R}(N/xN,M).}$$
Since $N/xN$ is finitely generated and $\mathrm{dim}_{R}N/xN=1$, $\mathrm{Tor}_{i}^{R}(N/xN,M)\in \mathcal{S'}$ for $i\geq 0$ by Lemma \ref{lem:2.8}, and so $(0:_{\mathrm{Tor}_{i}^{R}(N,M)}x)$ and $\mathrm{Tor}_{i}^{R}(N,M)/x\mathrm{Tor}_{i}^{R}(N,M)$ are also in $\mathcal{S'}$ for $i\geq 0$. Therefore, $\mathrm{Tor}_{i}^{R}(N,M)$ is $I$-coartinian for $i\geq 0$ by \cite[Proposition 2.6]{S2}.
\end{proof}

\bigskip \centerline {\bf Acknowledgments}
This research was partially supported by National Natural Science Foundation of China (11761060, 11901463).

\bigskip

\end{document}